\newtheorem{theorem}{Theorem}[section]
\newtheorem{lemma}[theorem]{Lemma}
\newtheorem{corollary}[theorem]{Corollary}
\theoremstyle{definition}
\newtheorem{definition}[theorem]{Definition}
\theoremstyle{remark}
\newtheorem{remark}[theorem]{Remark}
\numberwithin{equation}{section}
\newcommand{ \mint }{ {\int\hspace{-0.38cm}- }}
\newcommand{ \mr }{ \mathbb{R} }
\newcommand{ \ba }{ \mathbf{a}}
\begin{document}

\title[Quasi-linear equations of Schr\"dinger type]
{Interior and boundary $W^{1,q}$-estimates for quasi-linear elliptic equations of Schr\"odinger type}


\author{Mikyoung Lee}
  \address{Department of Mathematics, Pusan National University, Busan
46241, Republic of Korea}
\email{mikyounglee@pusan.ac.kr}

\author{Jihoon Ok}
\address{Department of Applied Mathematics and Institute of Natural Science, Kyung Hee University, Yongin 17104, Republic of Korea}
\email{jihoonok@khu.ac.kr}


\subjclass[2010]{Primary 35J10, 35J92; Secondary 35J25, 35B65}



\keywords{Schr\"odinger operator; $p$-Laplacian; gradient estimate}

\begin{abstract} We consider nonlinear elliptic equations that are naturally obtained from the elliptic Schr\"odinger equation $-\Delta u +Vu=0$ in the setting of the calculus of variations, and obtain $L^q$-estimates for the gradient of weak solutions. In particular, we generalize  a result of  Shen in  [Ann. Inst. Fourier 45 (1995), no. 2, 513--546] in the nonlinear setting by using a different approach. This allows us to consider discontinuous coefficients with a small BMO semi-norm and non-smooth boundaries which might not be Lipschitz continuous.
\end{abstract}

\maketitle



\section{\bf Introduction} \label{secintro}
The present paper is devoted to the study of interior and boundary $L^q$-integrability for the gradient of weak solutions to time independent  quasi-linear equations  of the $p$-Schr\"odinger type
\begin{equation}\label{maineqm}
-\mathrm{div}\, (|Du|^{p-2}Du) + V |u|^{p-2}u  =  0 \ \  \textrm{ in } \  \Omega,
\end{equation}
where $1<p<\infty$, $\Omega\subset \mr^n$($n\geq2$) is open and bounded, and the non-negative potential $V$ is taken in an appropriate class. We notice that if $p=2,$ the equation \eqref{maineqm} becomes
\begin{equation} \label{propeq}
- \Delta u  + Vu=0\ \ \ \textrm{in}\ \ \Omega,
\end{equation}
which is the classical (elliptic) Schr\"odinger equation. In the viewpoint of the calculus of variations, the equation \eqref{maineqm} is the Euler-Lagrange equation of the following functional
$$
W^{1,p}(\Omega)\ni u \ \ \mapsto\ \ \int_{\Omega} \left[|Du|^p+V|u|^p\right]\, dx,
$$
hence it is one of nonlinear generalizations of the  Schr\"odinger equation \eqref{propeq} in a natural way. Moreover, problems of this type raise in various areas of physics, such as nonlinear quantum field theory, nonlinear optics, plasma physics, condensed matter physics, biophysics, fluid mechanics, etc. We refer to \cite{APT1,BS1, LS1,MF1,SS} for the general physical background of this equation.

Research on the Schr\"odinger type equations which are fundamental ones of quantum mechanics plays a significant role in the fields of mathematical physics. In particular, $L^q$-regularity theory for linear Schr\"odinger equations was first introduced by Shen \cite{Sh1}. He obtained $L^q$-estimates by assuming that $V$ belongs to the $\mathcal B_{\gamma}$ class for some $\gamma\geq \frac n2$ which is a certain reverse H\"older class (see below for the definition of $\mathcal B_\gamma$). More precisely, for the Schr\"odinger equations with non-divergence data of the form $-\Delta u+Vu=f$ in $\mr^n$, he showed  $\Vert D^2u\Vert_{L^q(\mr^n)}+\Vert Vu\Vert_{L^q(\mr^n)} \leq c(q)\|f\|_{L^q(\mr^n)}$  for all $1<q\leq \gamma$, and  for the equations with divergence data of the form
\begin{equation}\label{divschrodinger}
-\Delta u+Vu=-\textrm{div}\, F\ \ \ \text{in}\ \ \mr^n,
\end{equation}
he also did
$$
\Vert Du \Vert_{L^{q}(\mr^n)} +\chi_{\{q\leq 2\gamma\}} \Vert V^{\frac12}u \Vert_{L^{q}(\mr^n)} \leq c(q)\Vert F \Vert_{L^{q}(\mr^n)} , \ \ \textrm{for all}\ \  (\gamma^*)' \leq q \leq \gamma^*,
$$
where $\gamma^*=\frac{n\gamma}{n-\gamma}$ when $\gamma<n$ (if $\gamma\geq n$, then $q$ can be any number in $(1,\infty)$). Here, we remark that the range of $q$ is optimal, see \cite[Section 7]{Sh1}.  These results have been recently extended to linear elliptic/parabolic Schr\"odinger equations with discontinuous coefficients on sufficiently smooth domains in several papers for instance \cite{BHS1,BBHV1,PT1}, by using the results in \cite{Sh1} together with the commutator method and the standard flattening and covering arguments. We also refer to \cite{CFG1, D1, FPR, K1, PT1, Sh0, Sh1}  for the regularity theory for (elliptic) Schr\"odinger equations.

The general aim of this paper is to establish interior and boundary $L^q$-regularity theory for nonlinear Schr\"odinger equations in non-smooth domains. In particular, as mentioned earlier, we deal with quasi-linear equations of $p$-Laplacian type which are the natural generalizations of the classical Schr\"odinger equation in the divergence setting. Moreover, the domains we consider here might be non-graph domains which are
beyond the class of Lipschitz domains. We point out that the approach used in \cite{Sh1} cannot be applied to the nonlinear setting. Indeed, Shen  in \cite{Sh1} derived the decay estimates for the fundamental solution by means of the Fefferman-Phong Lemma in \cite{Fe} by introducing an auxiliary function $m(x,V)$ which is well-defined for $q \geq \frac{n}{2}.$ Furthermore, on the boundary region we cannot make use of the flattening argument since our domain is supposed to be non-smooth. Therefore, an alternative approach must be adopted in order to handle the structures of the nonlinear operators and the non-smooth domains. In our best knowledge, the present paper is a new one treating $L^q$-estimates for Schr\"odinger equations in a non-linear setting and even for linear Schr\"odinger equations on non-smooth domains.

Now let us present our main equations. We are concerned with the Dirichlet problem for the quasi-linear Schr\"odinger equation of the form
\begin{equation}
\label{maineq}
\left\{\begin{array}{rclcc}
-\mathrm{div}\, \mathbf{a}(x,Du) + V |u|^{p-2}u&  =  & -\mathrm{div}\, (|F|^{p-2}F) & \textrm{ in } & \Omega,  \\
u & = & 0 & \textrm{ on } & \partial \Omega,
\end{array}\right.
\end{equation}
where $1<p<\infty$, $\Omega$ is open and bounded in $\mr^{n}$ with $n\geq2$, and $V:\Omega\to \mr$ is non-negative and at least satisfies $V\in L^{n/p}(\Omega)$ if $p<n$ and $V\in L^t(\Omega)$ for some $t>1$ if $p\geq n$.
 A  given vector valued function $ \mathbf{a} : \mr^n \times \mr^n \rightarrow \mr^n $ is a Carath\'eodory function, that is,  $ \mathbf{a}$ is measurable in the $x$-variable and differentiable in the $\xi$-variable. We will always assume that $\mathbf{a}$ satisfies the following growth and ellipticity conditions:
\begin{equation}\label{aas1}
| \mathbf{a}(x,\xi)|+ | D_{\xi}  \mathbf{a}(x,\xi)||\xi| \leq L |\xi|^{p-1}
\end{equation}
and
\begin{equation}\label{aas2}
D_{\xi}  \mathbf{a}(x,\xi)\, \eta \cdot  \eta \geq \nu |\xi|^{p-2}|\eta|^2
\end{equation}
for almost all $x \in \mr^n$ and any $\xi, \eta \in \mr^n$ and for some constants $L, \nu$ with $0< \nu \leq 1 \leq L.$  A prime example of the nonlinearlity $\mathbf{a}$ is
$$
\mathbf{a}(x,\xi) = a(x)|\xi|^{p-2}\xi,\ \ \nu\leq a(\cdot)\leq L,
$$
which is the $p$-Laplacian with the coefficient $a(\cdot).$  We also remark that the above condition \eqref{aas2} implies the monotonicity condition:
\begin{equation}\label{mono}
\left( \mathbf{a}(x,\xi) - \mathbf{a}(x,\eta) \right) \cdot (\xi-\eta) \geq c(p,\nu) \left( |\xi|^2 + |\eta|^2 \right)^{\frac{p-2}{2}} |\xi-\eta|^2
\end{equation}
for any $\xi, \eta \in \mr^n$ and a.e. $x \in \mr^n.$
In particular, if $p \geq 2,$ it can be the following
\begin{equation}\label{mono1}
\left( \mathbf{a}(x,\xi) - \mathbf{a}(x,\eta) \right) \cdot (\xi-\eta) \geq c(p,\nu)  |\xi-\eta|^p.
\end{equation}
Under the above basic setting,  we say that  $u \in W^{1,p}_0(\Omega)$ is a  weak solution to the problem \eqref{maineq} if
\begin{equation}\label{weakform}
 \int_{\Omega} \mathbf{a}(x, Du) \cdot D\varphi \, dx  + \int_{\Omega} V |u|^{p-2}u \cdot \varphi  \, dx =\int_{\Omega} |F|^{p-2} F \cdot D \varphi\, dx
\end{equation}
holds for any $\varphi \in W_0^{1,p}(\Omega).$ We note that if $u\in W^{1,p}_0(\Omega)$, $\|Du\|_{L^p(\Omega)}$ and $\|Du\|_{L^p(\Omega)}+\|V^{\frac1p}u\|_{L^p(\Omega)}$ are equivalent  by the condition of the potential $V$ and Sobolev-Poincar\'e's inequality, and that the existence and the uniqueness of the weak solution of \eqref{maineq} (even in the case of a non-zero Dirichlet boundary condition such that  $u=g$ on $\Omega$ with $g\in W^{1,p}(\Omega)$) follow from the theory of nonlinear functional analysis, see for instance \cite[Chapter 2]{Sho1}.

For the potential $V:\Omega\to \mr$ considered in the problem \eqref{maineq}, we suppose that
$V$ belongs to $\mathcal{B}_{\gamma}$ for some $ \gamma \in [ \frac{n}{p}, n)$ when $p<n$ and for some $\gamma \in (1,n)$ when $p\geq n.$ We say that $V:\mr^n\to [0,\infty)$ belongs to $\mathcal{B}_\gamma$ for some $ \gamma >1$ if $V\in L^{\gamma}_{loc}(\mr^n)$ and there exists a constant $b_{\gamma}>0$ such that the \textit{reverse H\"older inequality}
\begin{equation}\label{VBqclass}
\left( \frac{1}{|B|} \int_{B} V^{\gamma} \, dx \right)^{\frac{1}{\gamma}} \leq b_\gamma \left( \frac{1}{|B|} \int_{B} V \, dx \right)
\end{equation}
holds for every ball $B$ in $\mr^n.$  This  $\mathcal{B}_\gamma$ class which is a wide class including all nonnegative polynomials was introduced independently by Muckenhoupt \cite{Mu} and Gehring \cite{Ge} in the study of weighted norm inequalities and quasi-conformal mapping, respectively. One notable example of this element is $ V(x) = |x|^{-n/ \gamma} $
which actually belongs to the $\mathcal{B}_{\tilde{\gamma}}$ class for all $\tilde{\gamma} < \gamma.$ Moreover, the $B_\gamma$ class is strongly connected to the Muckenhoupt class, for which we will discuss later in Section \ref{Preliminaries}.


Our main result is the global integrability of $Du$ and also $V^{\frac1p}u$ for the weak solutions $u$ to the problem  \eqref{maineq} with respect to the one of $F$, under a suitable discontinuity condition on  the nonlinearity $\mathbf{a}$ and a minimal structure condition on the boundary of the domain $\Omega$ that will be described later in Definition \ref{smallbmo} and \ref{Defreifenberg}, respectively.  More precisely,  we prove that
\begin{equation}\label{implication1}
F\in L^q\  \Longrightarrow  \ Du \in L^q\ \ \text{for each } \left\{\begin{array}{cl}
q\in[p,\gamma^*(p-1))&  \text{when }\gamma\in [\frac np,n),\\
q\in[p,\infty)&  \text{when } \gamma\in[n,\infty),
\end{array}\right.
\end{equation}
\begin{equation}\label{implication2}
F\in L^q \ \ \Longrightarrow \ \ V^{\frac1p}u \in L^q \ \ \ \text{for each } p\leq q \leq p \gamma,
\end{equation}
by obtaining relevant estimates, see Corollary  \ref{maincor} and Remark \ref{mainrmk} in the next section. We would like to emphasize that for the Schr\"odinger equation \eqref{divschrodinger}, that is, the equation  \eqref{maineq} when $p=2$ and  $\ba(x,\xi)\equiv \xi,$  our results cover the ones in \cite[Corollary 0.10]{Sh1} for $q\geq p=2$. Note that, in this linear case, the validity of the implications \eqref{implication1} and \eqref{implication2} for $ \gamma^*<q<2$ can be achieved via the duality argument, see for instance \cite{Um1}.

For the equation \eqref{maineq} with the null potential, i.e., $V\equiv0$, the $L^q$-estimates, which is sometimes called the (nonlinear) Calder\'on-Zygmund estimates, have been widely studied by many authors.
 Iwaniec \cite{Iw1} first obtained the $L^q$-estimates for the $p$-Laplace equations with $p\geq 2$, and then DiBenedetto \& Manfredi \cite{DM1} extended his result to the $p$-Laplace systems with $1<p<\infty$.  Later, Caffarelli \& Peral \cite{CP1} considered general equations of the $p$-Laplacian type with discontinuous nonlinearities. Furthermore, Acerbi \& Mingione generalized $L^q$-estimates for the parabolic $p$-Laplace systems with discontinuous coefficients \cite{AM1}. We also refer to \cite{BR1,LO1,MP1,KZ1,Mis1} for problems with $p$-Laplacian type and \cite{AM0,BO1,BOR1,CM1,Ok1} for problems with nonstandard growth.

We briefly discuss  the outline of the proof of the $L^q$-estimates.  As mentioned earlier, our approach is different from the one used in \cite{Sh1} which is  based on the linear operator theory. We adopt a perturbation argument which has turned out to be very useful for the study on the regularity theory for linear and nonlinear PDEs. In particular, we employ the method introduced by Acerbi \& Mingione in \cite{AM1}, see also \cite{Min1} for its origin. To be more concrete, we apply an exit time argument to a nonlinear functional of $Du, V^{\frac1p}|u|$ and $F,$ in order to construct a suitable family of balls which covers the level set for $|Du|+V^{\frac1p}|u|$.  Then, on each ball, we compare our equation \eqref{maineq} with the homogeneous equation
$$
 -\mathrm{div}\, \ba(x,Dw)+V|w|^{p-2}w=0.
$$
The main part at this step is to find the maximal integrability of $Dw$ and $V^{\frac1p}w$ with corresponding estimates.  In view of the classical regularity theory we know the  $L^\infty$-boundedness of $w$ (see Lemma~\ref{supvlem}), from which together with the result in our recent paper \cite{LO1} (see Theorem~\ref{thmDwbdd}), we see that $Dw \in L^{\gamma^*(p-1)}$ and $V^{\frac1p}w\in L^{p\gamma}$ (see Lemma~\ref{lem42}). Here, we point out that the corresponding estimates  \eqref{DwDwVwest} and \eqref{DwDwVwest1} are derived in a very delicate way. Especially, at this stage, the $\mathcal B_\gamma$ condition of $V$ plays a crucial role, so that we take advantage of the idea of Fefferman \& Phong in \cite{Fe} to obtain the modified version of Fefferman-Phong Lemma (see Lemma~\ref{lemrVbddpq}). Then from those corresponding estimates, the $L^q$-estimates for $|Du|+V^{\frac1p}|u|$ is derived by the comparison argument when $q\leq p\gamma$. Furthermore, applying the results in \cite{LO1}, we eventually obtain the $L^q$-estimates for $|Du|$ when $p\gamma<q\leq \gamma^*(p-1).$

The remainder of this paper is organized as follows. In the next section, we state our main results with primary assumptions imposed on the nonlinearlity $\mathbf{a}$ and the domain $\Omega.$ Section~\ref{Preliminaries} deals with the basic properties of $\mathcal{B}_{\gamma}$ class and the auxiliary lemmas to prove the main results. In Section~\ref{sechomo}, we show higher integrability of $Du$ and $V^{\frac1p}u$ for weak solutions $u$ to localized equations of our main problem \eqref{maineq} with $F\equiv 0.$
In Section~\ref{secComestimates}, we obtain the comparison estimates, and finally prove
main results, Theorem~\ref{mainthm} and Corollary \ref{maincor}, in Section~\ref{sec gradient estimates}.




\section{\bf  Main result }
\label{secpre}
We start this section with standard notation and definitions. We denote the open ball $\mr^n$ with center $y\in \mr^n$ and radius $r>0$ by $B_r(y)= \{ x \in \mr^{n} :  |x-y|< r \}.$ We also denote $\Omega_r (y)= B_r(y) \cap \Omega$ and $ \partial_w\Omega_{r}(y) = B_r(y) \cap \partial \Omega.$
For the sake of simplicity, we write $B_r=B_r(0),$ $B_r^+=B_r^+(0)$ and $\Omega_{r} = \Omega_{r}(0).$
We shall use the notation
$$ \mint_{U} g \; dx := \frac{1}{|U|} \int_{U} g \;dx. $$

The following two definitions are associated with the main assumptions imposed on the nonlinearlity  $\mathbf{a}$ and the domain $\Omega.$
\begin{definition} \label{smallbmo} We say that $\mathbf{a}=\mathbf{a}(x,\xi)$ is \textit{$(\delta,R)$-vanishing} if
$$
\sup_{0<\rho\leq R}  \ \sup_{y\in\mathbb{R}^n} \mint_{B_{\rho}(y) }  \left|\Theta\left( \mathbf{a},B_{\rho}(y) \right)(x) \right| \, dx   \leq \delta,
$$
where $$ \Theta\left( \mathbf{a},B_{\rho}(y) \right)(x):= \sup_{\xi \in \mr^n \setminus \{ 0\} } \frac{  \left|\mathbf{a}(x,\xi)-\overline{\mathbf{a}}_{B_{\rho}(y)}(\xi)\right|}{|\xi|^{p-1} }$$
and $$ \overline{\mathbf{a}}_{B_{\rho}(y)}(\xi) := \mint_{B_{\rho}(y)} \mathbf{a}(x,\xi) \;dx.$$
\end{definition}

The above definition implies that the map $x\mapsto \ba(x,\xi)/|\xi|^{-p}$ is a (locally) BMO function with the BMO semi-norm less than or equal to $\delta$ for all $\xi\in\mr^n$. Hence we see that the nonlinearity  $\ba$ can be discontinuous for the $x$-variable. In particular, if $\ba(x,\xi)=a(x)|\xi|^{p-2}\xi$, then this definition means that $a(\cdot)$ is a BMO function.

\begin{definition}\label{Defreifenberg} Given $\delta \in (0,\frac18)$ and $R>0,$ we say that $\Omega$ is a $(\delta, R)$-Reifenberg flat domain if for every $x \in \partial\Omega$ and every $\rho \in (0, R],$ there exists a coordinate system $\{ y_1, y_2, \dots, y_n\}$ which may depend on $\rho$ and $x,$ such that in this coordinate system $x=0$ and that
$$ B_{\rho}(0) \cap \{ y_n > \delta \rho \} \subset B_{\rho}(0)  \cap \Omega \subset B_{\rho}(0) \cap \{ y_n > -\delta \rho \}.
$$
\end{definition}
In the above definition of the Reifenberg flat domain, $\delta$ is usually supposed to be less than $\frac18$. This number comes from the Sobolev embedding, see for instance \cite{To1}. However, it is not important since we will consider $\delta$ sufficiently small. We note that the Lipschitz domains with the Lipschitz constant less than or equal to $\delta$ belong to the class of $(\delta,R)$-Reifenberg flat domains for some $R>0$. In addition, we remark that the $(\delta,R)$-Reifenberg flat domain $\Omega$ has the following measure density conditions:
\begin{equation}\label{dencon}
\sup_{0<\rho\leq R} \sup_{y \in \overline{\Omega}} \frac{\left|B_{\rho}(y)\right|}{\left|\Omega \cap B_{\rho}(y)\right|} \leq \left( \frac{2}{1-\delta} \right)^{n} \leq \left( \frac{16}{7} \right)^n,
\end{equation}
\begin{equation}\label{dencon1}
\inf_{0<\rho\leq R} \inf_{y \in \partial\Omega} \frac{|\Omega\cap B_{\rho}(y) |}{\left|B_{\rho}(y)\right|} \geq \left( \frac{7}{16} \right)^n.
\end{equation}
We refer to \cite{BW1,PS1,Re1,To1} for more details on the Reifenberg flat domains and their applications.

Now let us state the main results in this paper.
\begin{theorem}
\label{mainthm}
Let $u\in W^{1,p}_0(\Omega)$ be a weak solution to \eqref{maineq}. Suppose that $V \in \mathcal{B}_{\gamma}$ for some $ \gamma \in [ \frac{n}{p}, n)$ when $p<n$ and for some $\gamma \in (1,n)$ when $p\geq n.$ For $p\leq q <\gamma^*(p-1)$, there exists  a small  $\delta = \delta(n, p, L, \nu)>0$ so that if $\mathbf{a}$ is $( \delta, R)$-vanishing and $\Omega$ is a $( \delta, R)$-Reifenberg flat domain for some $R\in(0,1),$ then we have for any $x_0\in \overline{\Omega}$ and $r \in (0, \frac{R}{4}]$ satisfying 
$ (4r)^{p - \frac{n}{\gamma}} \Vert V \Vert_{L^{\gamma}(\Omega_{4r}(x_0))} \leq 1,$
\begin{eqnarray}\label{mainlocalest}
\nonumber\left( \mint_{\Omega_{ r}(x_0)} |Du|^q +\chi_{\{q<p\gamma\}} \left[V^{\frac1p}|u|\right]^{q}\,  dx\right)^{\frac1q} &&\\
&&\hspace{-6cm}\leq c \left( \mint_{\Omega_{4r}(x_0)} |Du|^p + \left[ V^{\frac1p} |u|\right]^p \,dx \right)^{\frac{1}{p}}+ c  \left(\mint_{\Omega_{4 r}(x_0)}   \left|F \right|^{q} \, dx\right)^{\frac1q}
\end{eqnarray}
for some $c=c(n,p,q,\gamma,L,\nu,b_{\gamma})>0,$ where $\chi_{\{q<p\gamma\}}:= 1$ if $q<p\gamma$ and $\chi_{\{q<p\gamma\}}:= 0$ if $q\geq p\gamma.$
\end{theorem}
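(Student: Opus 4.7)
The plan is to establish \eqref{mainlocalest} by a Calder\'on--Zygmund-type perturbation argument in the spirit of Acerbi--Mingione. By the scaling $u \mapsto \lambda u$ and the standing normalization $(4r)^{p-n/\gamma}\Norm{V}_{L^\gamma(\Omega_{4r}(x_0))}\leq 1$, one can reduce the statement to a localized, normalized problem on $\Omega_{4r}(x_0)$ in which the right-hand side of \eqref{mainlocalest} is at most $1$; this lets me work with a maximal-function--type threshold of size $\lambda \gg 1$ on the super-level sets
\begin{equation*}
E(\lambda) := \Bigl\{ x \in \Omega_r(x_0) : |Du(x)|^p + \bigl[V^{\frac1p}|u(x)|\bigr]^p > \lambda^p \Bigr\}
\end{equation*}
and derive a decay estimate of the form $|E(A\lambda)| \leq C \lambda^{-\theta}\,|E(\lambda)|$ for some $\theta > q$, which, after integrating against $\lambda^{q-1}\,d\lambda$, yields the claimed $L^q$ bound.

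To produce such a decay, I would first run an exit-time / Vitali covering argument applied to the nonlinear functional
\begin{equation*}
\Phi(x,\rho) := \mmint_{\Omega_\rho(x)} \bigl( |Du|^p + [V^{\frac1p}|u|]^p + |F|^p \bigr)\,dy .
\end{equation*}
For $x \in E(A\lambda)$ one selects the largest radius $\rho_x$ at which $\Phi(x,\rho_x) = \lambda^p$; the density bound \eqref{dencon}--\eqref{dencon1} guarantees that the exit radii are controlled by $r$, and a standard Vitali-type extraction produces a pairwise disjoint subfamily $\{B_{\rho_i}(x_i)\}$ covering $E(A\lambda)$ up to dilation. On each such ball $B_i=B_{\rho_i}(x_i)$ (or its boundary counterpart $\Omega_{\rho_i}(x_i)$), I introduce the frozen homogeneous comparison function $w \in u + W^{1,p}_0(\Omega_{\rho_i}(x_i))$ solving
\begin{equation*}
-\mathrm{div}\,\overline{\ba}_{B_i}(Dw) + V|w|^{p-2}w = 0 ,
\end{equation*}
and invoke the comparison estimates from Section~\ref{secComestimates} together with the $(\delta,R)$-vanishing and $(\delta,R)$-Reifenberg hypotheses to bound $\mmint_{\Omega_{\rho_i}(x_i)}(|Du-Dw|^p+[V^{\frac1p}|u-w|]^p)\,dy$ by a small multiple of $\lambda^p$ plus the $F$-tail.

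The crux is now to transfer the higher integrability of $(Dw, V^{\frac1p}w)$ provided by Lemma~\ref{lem42}: one gets $Dw \in L^{\gamma^*(p-1)}$ and $V^{\frac1p}w \in L^{p\gamma}$, with quantitative estimates \eqref{DwDwVwest}--\eqref{DwDwVwest1}, where the Fefferman--Phong-type lemma (Lemma~\ref{lemrVbddpq}) is what lets the $\mathcal B_\gamma$-information on $V$ be converted into an improved exponent. Combining the comparison inequality with this improved integrability, a Chebyshev argument on the ball $B_i$ yields
\begin{equation*}
|E(A\lambda)\cap B_i| \;\leq\; C\,A^{-\tau}\,|B_i| \;+\; C\,\lambda^{-q} \int_{B_i\cap\{|F|>\varepsilon\lambda\}}|F|^q\,dx,
\end{equation*}
for $\tau = \min\{p\gamma, \gamma^*(p-1)\} > q$ (which is precisely the source of the dichotomy in the characteristic function $\chi_{\{q<p\gamma\}}$: when $q < p\gamma$ both $Du$ and $V^{\frac1p}u$ can be controlled, whereas for $q \geq p\gamma$ only $Du$ survives and the upper exponent $\gamma^*(p-1)$ is active). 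Summing over $i$ via disjointness and integrating $\lambda^{q-1}$ over $(\lambda_0,\infty)$ then gives \eqref{mainlocalest}.

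The main obstacle I anticipate is the delicate bookkeeping at the comparison step near the boundary, since we are in a merely Reifenberg-flat domain and the potential term $V|u|^{p-2}u$ does not scale homogeneously with the principal part: choosing the exit-time functional to contain both $|Du|^p$ and $[V^{\frac1p}|u|]^p$ simultaneously, and absorbing the potential term uniformly via the smallness assumption $(4r)^{p-n/\gamma}\Norm{V}_{L^\gamma(\Omega_{4r}(x_0))} \leq 1$, is what makes the comparison stable. A secondary subtlety is the two-regime estimate for $q$ close to $\gamma^*(p-1)$, where one must iterate the comparison once more using only $Du$ (dropping the $V^{\frac1p}u$ term), exactly as the characteristic function $\chi_{\{q<p\gamma\}}$ records in the statement.
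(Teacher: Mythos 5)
Your sketch for the range $p\le q<p\gamma$ tracks the paper's Steps 1--4 closely: the exit-time/Vitali covering on the functional involving $|Du|^p+[V^{1/p}|u|]^p+|F|^p$, the comparison with a homogeneous solution, and the transfer of the $L^{p\gamma}$ (resp.\ $L^{\gamma^*(p-1)}$) higher integrability of the comparison map via Lemma~\ref{lemrVbddpq} and Lemma~\ref{lem42} into a level-set decay, followed by integration in $\lambda$. One small organizational difference: the paper's Lemma~\ref{comestlem} compares $u$ with the solution of $-\mathrm{div}\,\mathbf a(x,Dw)+V|w|^{p-2}w=0$ with the \emph{unfrozen} coefficient $\mathbf a(x,\cdot)$ and the $V$-term kept; the freezing of $\mathbf a$ is hidden inside the cited estimate Theorem~\ref{thmDwbdd}, not performed at the covering stage as you propose. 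This is harmless, but your version would need a separate estimate for $\mathbf a(x,\cdot)-\overline{\mathbf a}_{B_i}(\cdot)$ applied to $Dw$ in addition to the analysis already present.

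The genuine gap is in your Step~5 handling of $q\in[p\gamma,\gamma^*(p-1))$. You propose to \emph{iterate the comparison once more using only $Du$, dropping the $V^{1/p}u$ term}. This does not work as stated: the comparison estimate (Lemma~\ref{comestlem}) is derived under the stopping-time normalization $\mmint_{\Omega_{4\rho}}\bigl[|Du|+V^{1/p}|u|\bigr]^p\,dx<\lambda^p$, i.e.\ it requires simultaneous control of both $|Du|$ and $V^{1/p}|u|$ on the exit ball. If you build the exit-time functional from $|Du|^p+|F|^p$ alone, you lose the bound on $V^{1/p}|u|$ that makes the potential term absorbable, and you cannot conclude $\mmint\bigl[|Du-Dw|^p+V|u-w|^p\bigr]\le\epsilon\lambda^p$. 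The paper sidesteps this entirely: once the $q<p\gamma$ case is established, one fixes a $\tilde q\in(1,\gamma)$ with $q=(p-1)\tilde q^*$ (or a suitable $\tilde q$ when $p>n$ and $q\le n(p-1)/(n-1)$), uses the already-proved estimate at exponent $p\tilde q<p\gamma$ to bound $V|u|^{p-2}u$ in $L^{\tilde q}(\Omega_{2r})$, and then feeds this as the datum $f=V|u|^{p-2}u$ into the mixed-data Calder\'on--Zygmund estimates of Theorem~\ref{thmDwbdd} (or Theorem~\ref{thmDwbdd1}). That substitution is the missing idea: the high range for $Du$ is obtained not by re-running the exit-time machinery, but by bootstrapping through the CZ theory for the equation $-\mathrm{div}\,\mathbf a(x,Du)=f-\mathrm{div}(|F|^{p-2}F)$ once the lower-exponent bound is in hand.
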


\begin{remark}
Let $\Omega$ be a $(\delta,R)$ Reifenberg flat domain for some small $\delta>0$ and $R>0$ and $V\in \mathcal B_\gamma$ with $\gamma\geq \frac{n}{p}$ and $p>1$. Define 
$$
\rho(y,V):= \sup\left\{r\in(0,R]:r^{p - \frac{n}{\gamma}} \Vert V \Vert_{L^{\gamma}(\Omega_{r}(y))} \leq 1 \right\},\quad y\in \overline{\Omega}.
$$
Then by H\"older's inequality, the $\mathcal B_\gamma$ condition of $V$ and \eqref{dencon}, we see that the function $\rho(y,V)$ is comparable to
$$
\tilde\rho(y,V):= \sup\left\{r\in(0,R]:\frac{1}{r^{n-p}} \int_{\Omega_{r}(y)} V \,dx \leq 1 \right\},\quad y\in \overline{\Omega},
$$
i.e. $\frac{1}{c}\tilde\rho(y,V) \leq \rho(y,V)\leq c \tilde\rho(y,V)$ for all $y\in\overline\Omega$ with constant $c$ independent of $y$. When $p=2$, recalling the function $m(y,V)$ defined in \cite[Definition 1.3]{Sh1}, we notice that $\tilde \rho(y,V)$ is a local version of $\frac{1}{m(y,V)}$. In view of this observation, it seems that the restriction $(4r)^{p - \frac{n}{\gamma}} \Vert V \Vert_{L^{\gamma}(\Omega_{4r}(x_0))} \leq 1$ in Theorem~\ref{mainthm} is reasonable.
\end{remark}

\begin{remark}
In Theorem~\ref{mainthm}, we can obtain the estimate \eqref{mainlocalest} uniformly with respect to $x_0$ by taking $r>0$ such that
$$
r \leq \frac{1}{4}\min\left\{R,\Vert V \Vert_{L^{\gamma}(\Omega)}^{-\frac{\gamma}{p\gamma-n}}\right\},
$$
since this together with the fact that $p\gamma>n$ implies  
$$ (4r)^{p - \frac{n}{\gamma}} \Vert V \Vert_{L^{\gamma}(\Omega_{4r}(x_0))} \leq  (4r)^{p-\frac{n}{\gamma}} \Vert V \Vert_{L^{\gamma}(\Omega)} \leq 1.
$$
\end{remark}

As a consequence of Theorem~\ref{mainthm} and the preceding remark, we obtain the global gradient estimates for solutions to \eqref{maineq}. 
\begin{corollary}\label{maincor}
Let $u\in W^{1,p}_0(\Omega)$ be a weak solution to \eqref{maineq}. Suppose that $V \in \mathcal{B}_{\gamma}$ for some $ \gamma \in [ \frac{n}{p}, n)$ when $p<n$ and for some $\gamma \in (1,n)$ when $p\geq n.$ For $p\leq q <\gamma^*(p-1)$, there exists a small $ \delta = \delta(n, p, L, \nu) \in (0,\frac18)$ so that if $\mathbf{a}$ is $( \delta, R)$-vanishing and $\Omega$ is a $( \delta, R)$-Reifenberg flat domain for some $R\in(0,1),$ then we have
\begin{equation}\label{maingloest}
\Vert Du \Vert_{L^{q}(\Omega)} +\chi_{\{q<p\gamma\}} \Vert V^{\frac1p}u \Vert_{L^{q}(\Omega)} \leq c\left(\frac{\mathrm{diam}(\Omega)}{\tilde{R}}\right)^{n\left(\frac{1}{q}-\frac{1}{p}\right)}\Vert F \Vert_{L^{q}(\Omega)} 
\end{equation}
for some $c=c(n,p,q,\gamma,L,\nu,b_{\gamma})>0,$ where
$\tilde{R} := \min\left\{ R, \Vert V \Vert_{L^{\gamma}(\Omega)}^{-\frac{1}{p-\frac{n}{\gamma}}}\right\}.$
Here,
 $\chi_{\{q<p\gamma\}}:= 1$ if $q<p\gamma$ and $\chi_{\{q<p\gamma\}}:= 0$ if $q\geq p\gamma.$
\end{corollary}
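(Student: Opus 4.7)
The plan is to promote the local estimate of Theorem \ref{mainthm} to the global bound \eqref{maingloest} via a finite-overlap covering argument, the basic $L^p$ energy estimate, and a H\"older interpolation step. First I would invoke the remark preceding the corollary: the choice $r := \tilde{R}/4$ ensures that the smallness condition $(4r)^{p - n/\gamma}\|V\|_{L^{\gamma}(\Omega_{4r}(x_0))} \leq 1$ required by Theorem \ref{mainthm} holds uniformly for every $x_0 \in \overline{\Omega}$. Second, testing the weak formulation \eqref{weakform} with $\varphi = u$ and combining the ellipticity \eqref{aas2} with Young's inequality produces the global energy bound
$$
\|Du\|_{L^p(\Omega)} + \|V^{1/p}u\|_{L^p(\Omega)} \leq C\|F\|_{L^p(\Omega)}.
$$
Third, I would fix a Vitali-type cover $\{B_r(x_i)\}_{i=1}^N$ of $\overline{\Omega}$ with $x_i \in \overline{\Omega}$ and uniformly bounded overlap, where $N \leq C(\mathrm{diam}(\Omega)/r)^n$.

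The central step is to apply \eqref{mainlocalest} on each $\Omega_r(x_i)$, multiply through by $|\Omega_r(x_i)| \simeq r^n$ (invoking the Reifenberg density \eqref{dencon1}), and raise to the $q$-th power to obtain
$$
\|Du\|_{L^q(\Omega_r(x_i))}^q + \chi_{\{q<p\gamma\}}\|V^{1/p}u\|_{L^q(\Omega_r(x_i))}^q \leq C r^{n(1 - q/p)} A_i^q + C \|F\|_{L^q(\Omega_{4r}(x_i))}^q,
$$
where $A_i := \|Du\|_{L^p(\Omega_{4r}(x_i))} + \|V^{1/p}u\|_{L^p(\Omega_{4r}(x_i))}$. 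Since $q \geq p$, the elementary inequality $\sum_i b_i^{q/p} \leq (\sum_i b_i)^{q/p}$ (for $b_i \geq 0$), applied to $b_i = \|Du\|_{L^p(\Omega_{4r}(x_i))}^p$ and to its analogue for $V^{1/p}u$, combined with the bounded overlap and the energy estimate, gives $\sum_i A_i^q \leq C\|F\|_{L^p(\Omega)}^q$; the bound $\sum_i \|F\|_{L^q(\Omega_{4r}(x_i))}^q \leq C\|F\|_{L^q(\Omega)}^q$ is immediate from the finite overlap.

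Summing the per-ball inequalities and then applying H\"older $\|F\|_{L^p(\Omega)} \leq |\Omega|^{1/p - 1/q}\|F\|_{L^q(\Omega)} \leq C(\mathrm{diam}(\Omega))^{n(1/p - 1/q)}\|F\|_{L^q(\Omega)}$ transfers the $L^p$ contribution into an $L^q$ one, producing a scaling factor of the form $(\mathrm{diam}(\Omega)/r)^{n(1/p - 1/q)}$; inserting $r = \tilde{R}/4$ finishes the argument. I do not expect any deep obstacle here, since the analytic content already sits in Theorem \ref{mainthm} and the energy estimate. The only delicate point is the careful tracking of the exponents $q/p$ and $1/p - 1/q$ through the covering and H\"older steps, which must conspire to give precisely the power of $\mathrm{diam}(\Omega)/\tilde{R}$ appearing on the right-hand side of \eqref{maingloest}.
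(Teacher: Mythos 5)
Your proposal follows essentially the same route as the paper's own proof: take $r=\tilde R/4$ so that the smallness hypothesis of Theorem~\ref{mainthm} holds for every center, derive the global energy inequality by testing with $u$, patch up a finite bounded-overlap Vitali cover, sum the local estimates using $\sum_i b_i^{q/p}\leq (\sum_i b_i)^{q/p}$ for $q\geq p$, and finish with H\"older's inequality $\|F\|_{L^p(\Omega)}\leq |\Omega|^{1/p-1/q}\|F\|_{L^q(\Omega)}$. Note that your careful tracking of exponents produces the factor $(\mathrm{diam}(\Omega)/\tilde R)^{n(1/p-1/q)}$, which agrees with the power $(q-p)n/p$ obtained in the paper's final display but differs in sign from the exponent $n(1/q-1/p)$ printed in the statement of \eqref{maingloest}, indicating a sign typo there.
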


\begin{remark}\label{mainrmk}  If $V\in \mathcal B_{\gamma}$, then $V$ belongs to the $\mathcal B_{\gamma+\epsilon}$ class for some small $\epsilon>0$ by virtue of the self improving property of the $\mathcal B_\gamma$ class in Lemma \ref{lemself} below. Therefore, by considering $\gamma+\epsilon$ instead of $\gamma$ in Theorem \ref{mainthm} and Corollary \ref{maincor},  the range of $q$ can be extended to $p\leq q\leq \gamma^*(p-1)$, and the estimates \eqref{mainlocalest} and \eqref{maingloest} can be replaced by
\begin{eqnarray*}
\left( \mint_{\Omega_{ r}(x_0)} |Du|^q +\chi_{\{q\leq p\gamma\}} \left[V^{\frac1p}|u|\right]^{q}\,  dx\right)^{\frac1q} &&\\
&&\hspace{-6cm}\leq c \left( \mint_{\Omega_{4r}(x_0)} |Du|^p + \left[ V^{\frac1p} |u|\right]^p \,dx \right)^{\frac{1}{p}}+ c  \left(\mint_{\Omega_{4 r}(x_0)}   \left|F \right|^{q} \, dx\right)^{\frac1q}
\end{eqnarray*}
and
$$
\Vert Du \Vert_{L^{q}(\Omega)} +\chi_{\{q\leq p\gamma\}} \Vert V^{\frac1p}u \Vert_{L^{q}(\Omega)} \leq c\,\Vert F \Vert_{L^{q}(\Omega)},
$$
respectively.

Finally, if the map $x\mapsto \ba(x,\xi)|\xi|^{-(p-1)}$ is in VMO uniformly for the $\xi$-variable, that is,
$$
\lim_{\rho\to0}\left(\sup_{y\in\mathbb{R}^n} \mint_{B_{\rho}(y) }  \left|\Theta\left( \mathbf{a},B_{\rho}(y) \right)(x) \right| \, dx\right)=0,
$$
and the boundary of $\Omega$ is $C^1$, we have the implications \eqref{implication1} and \eqref{implication2} for every $q$ in the ranges stated in there.
\end{remark}

\begin{remark} Under the assumption that $
V\in \mathcal B_{\gamma}\ \  \text{for some }\gamma\in[n,\infty),
$ in stead of $\gamma\in(\frac{n}{p},n),$  we see that the results of Theorem \ref{mainthm} and Corollary \ref{maincor} hold for any $q\in[p,\infty).$ Indeed, if $
V\in \mathcal B_{\gamma}$ for some $\gamma\in[n,\infty),
$
it is easily seen that  $V\in \mathcal B_{\gamma'}$ for any $\gamma'\in (1,\gamma)$ with the constant $b_{\gamma'}=b_{\gamma}$, by the definition of the $\mathcal B_{\gamma}$ class. Then for any $q\in[p,\infty),$ choosing $\gamma' = \gamma' (n,p,q) \in (\frac{n}{p},n)$ such that
$$
q<(\gamma')^*(p-1),
$$ we consequently obtain the results of Theorem \ref{mainthm} and Corollary \ref{maincor}  for any $q\in[p,\infty).$ Hence, we have the implications \eqref{implication1} for $\gamma\in[n,\infty)$ and \eqref{implication2}.
\end{remark}

\section{Preliminaries}\label{Preliminaries}

\subsection{$\mathcal{B}_\gamma$ class}\ \\
  In order to introduce primary features of the $\mathcal{B}_\gamma$ class,   let us first recall the Muckenhoupt $A_p$ and $A_\infty$ classes. We say that nonnegative function $V\in L^1_{loc}(\mr^n)$ is in  the $A_p$ class, $V\in A_p$,  for some $1\leq p<\infty$ if and only if
$$
\sup_B \left(\mint_BV\, dx\right)\left(\mint_B V^{-\frac{1}{p-1}}\, dx\right)^{p-1}<\infty
$$
and that $V\in L^1_{loc}(\mr^n)$ is in  the $A_\infty$ class, $V\in A_\infty$, if and only if
$$
\sup_B \left(\mint_BV\, dx\right)\exp\left(\mint_B \log V^{-1}\,dx\right)<\infty,
$$
where the supremum is taken over all balls $B \subset \mr^n.$  From the definition of $\mathcal{B}_{\gamma}$ in \eqref{VBqclass}, we notice that $V\in \mathcal{B}_{\gamma}$  for $\gamma\in(1,\infty)$ if and only if
$$
\sup_B\left(\mint_BV\, dx\right)^{-1} \left(\mint_{B}V^\gamma\,dx\right)^{\frac1\gamma}<\infty,
$$
where the supremum is taken over all balls $B \subset \mr^n,$ which is very similar to the condition of $A_p$, or $A_\infty$, class. Indeed, we have the following equivalent condition. For its proof, we refer to \cite[Theorem 9.3.3]{G1}.

\begin{lemma}\label{lemequiv}Let $V\in L^1_{loc}(\mr^n)$ be nonnegative. The following are equivalent
\begin{itemize}
\item[(1)] $V\in A_\infty$.
\item[(2)] There exist $\theta,\sigma\in(0,1)$ such that
$$
\left|\left\{x\in B: V(x)\leq \theta \mint_{B}V\, dy \right\}\right|\leq \sigma|B|
$$
for every ball $B$ in $\mr^n$.
\item[(3)] $V\in B_{\gamma}$ for some $\gamma>1$.
\item[(4)] $V\in A_p$ form some $p>1$.
\end{itemize}

In particular, if $V\in A_\infty$, then there exists $\theta\in(0,1)$ such that
$$
\left|\left\{x\in B: V(x)\leq \theta \mint_{B}V\, dy \right\}\right|\leq \frac12|B|
$$
for every ball $B$ in $\mr^n$, that is, one can choose that $\sigma=\frac12$.
\end{lemma}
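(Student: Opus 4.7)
The plan is to prove the four conditions are equivalent by establishing a circle of implications $(4)\Rightarrow(1)\Rightarrow(2)\Rightarrow(3)\Rightarrow(4)$, then extract the quantitative refinement $\sigma=\frac12$ at the end.

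First I would show $(4)\Rightarrow(1)$. If $V\in A_p$ then Jensen's inequality applied to the convex function $-\log$ (or equivalently the limit $\exp(\fint_B\log f)=\lim_{q\to 0^+}(\fint_B f^q)^{1/q}$) gives $\exp\bigl(\fint_B\log V^{-1}\,dx\bigr)\le \bigl(\fint_B V^{-1/(p-1)}\,dx\bigr)^{p-1}$, so the $A_p$ constant dominates the $A_\infty$ constant. Next, $(1)\Rightarrow(2)$ follows from an elementary Chebyshev-type argument: writing the $A_\infty$ bound as $\exp\bigl(\fint_B\log(V/\fint_B V)^{-1}\bigr)\le C$ and applying Jensen, the set where $V\le\theta\fint_B V$ forces $\log(V/\fint_B V)^{-1}$ to be at least $\log\theta^{-1}$, and choosing $\theta$ small enough makes the measure of this set at most any prescribed fraction of $|B|$.

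The main obstacle is $(2)\Rightarrow(3)$, the self-improving step producing a reverse H\"older inequality from a one-scale level set estimate. I would run a Calder\'on--Zygmund stopping time argument on a fixed ball $B_0$: iterate the good-$\lambda$ decomposition on the maximal function of $V\chi_{B_0}$ to show that for $t>t_0:=\fint_{B_0}V$, the distribution function satisfies $|\{V>Kt\}\cap B_0|\le \sigma|\{V>t\}\cap B_0|$ for some $K>1$ and $\sigma<1$ depending on the pair $(\theta,\sigma)$ from~(2). Integrating $\int_{t_0}^\infty t^{\gamma-1}|\{V>t\}|\,dt$ in the standard geometric way then yields $\fint_{B_0}V^\gamma\le c(\fint_{B_0}V)^\gamma$ for $\gamma-1$ small enough that the geometric series converges, which is precisely~(3). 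Finally, $(3)\Rightarrow(4)$ is immediate: the reverse H\"older inequality gives $\fint_B V^{-1/(p-1)}\cdot(\fint_B V)^{1/(p-1)}\le C$ for $p$ sufficiently large, by applying H\"older after writing $V\cdot V^{-1}$ and using the reverse H\"older bound on $V^\gamma$ together with Jensen on $V^{-1/(p-1)}$; alternatively this is the classical Coifman--Fefferman/Muckenhoupt result.

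For the last assertion, I would observe that $(2)\Rightarrow(2)$ with $\sigma=\frac12$ by iteration. Namely, if $(2)$ holds with some $(\theta_0,\sigma_0)$, then applying it to $V$ and replacing $\theta_0$ by a sufficiently small $\theta=\theta_0^N$ and iterating the level-set inclusion $N$ times, one obtains $|\{V\le\theta\fint_B V\}\cap B|\le \sigma_0^N|B|$. Choosing $N$ large enough that $\sigma_0^N\le \frac12$ yields the stated normalization, with $\theta$ depending only on the original parameters of $V$. This completes the proof modulo the routine verifications, and the full argument can be found in the cited reference \cite[Theorem 9.3.3]{G1}.
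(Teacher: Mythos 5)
The paper does not actually prove this lemma; it records the statement and defers to \cite[Theorem 9.3.3]{G1}. Your proposal is therefore a genuinely different route in the sense that you are reconstructing the classical Coifman--Fefferman/Muckenhoupt argument from scratch rather than citing it. The cycle $(4)\Rightarrow(1)\Rightarrow(2)\Rightarrow(3)\Rightarrow(4)$ is the standard way to organize the proof, and the first two links and the Calder\'on--Zygmund good-$\lambda$ argument for $(2)\Rightarrow(3)$ are described correctly at the level of a sketch.

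Two of your steps, however, do not work as written. For $(3)\Rightarrow(4)$, the ``write $V\cdot V^{-1}$ and apply H\"older together with Jensen on $V^{-1/(p-1)}$'' manipulation cannot produce the $A_p$ bound: Jensen for the convex function $t\mapsto t^{-1/(p-1)}$ gives $\big(\mint_B V\big)^{-1/(p-1)}\le \mint_B V^{-1/(p-1)}$, which is a lower bound on $\mint_B V^{-1/(p-1)}$, the opposite of what $A_p$ requires, and a reverse H\"older inequality controls upward oscillation of $V$ but says nothing directly about integrability of its negative powers. The standard route is to pass through a level-set/good-$\lambda$ argument again to obtain $A_p$; you implicitly acknowledge this by falling back on the classical theorem, but the direct argument you sketch is not a proof. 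For the final claim with $\sigma=\tfrac12$, the iteration you propose is not well-defined: condition $(2)$ compares $V$ to $\mint_B V$ over a ball $B$, so re-applying it on the same $B$ just returns the same pair $(\theta_0,\sigma_0)$; it does not give $(\theta_0^N,\sigma_0^N)$, because $\{V\le\theta_0\mint_B V\}\cap B$ is not a ball and $(2)$ says nothing about averages over it. Making a $\theta_0^N$-type iteration rigorous requires a Calder\'on--Zygmund stopping-time decomposition at the decreasing thresholds $\theta_0^k\mint_B V$, with the attendant doubling losses. A cleaner and self-contained way to get $\sigma=\tfrac12$ is to use $(1)$ and Chebyshev directly: set $g=\log_+\big(\mint_B V/V\big)$ and observe that $\mint_B g\le\log[V]_{A_\infty}+1$ (split $\log_+$ into $\log$ plus $\log_+(V/\mint_B V)$ and use $\log_+ s\le s$), whence $\big|\{V\le\theta\mint_B V\}\cap B\big|\le\big(\mint_B g\big)|B|/\log(1/\theta)\le\tfrac12|B|$ once $\theta\le\exp\!\big(-2(\log[V]_{A_\infty}+1)\big)$.
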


From the above equivalent conditions and the self improving property of the $A_p$ classes, one can deduce the self improving property of the $\mathcal{B}_\gamma$ classes as follows.

\begin{lemma}\label{lemself}
If $V\in \mathcal{B}_\gamma$ for some $\gamma>1$, then $V\in B_{\gamma+\epsilon}$ for some small $\epsilon>0$.
\end{lemma}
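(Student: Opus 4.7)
The plan is to follow the hint provided just before the lemma and route the argument through Lemma \ref{lemequiv} together with the classical self-improving property of the Muckenhoupt $A_p$ scale. The starting point is that \eqref{VBqclass} is a reverse H\"older inequality on balls, which, via the chain of equivalences in Lemma \ref{lemequiv}, is the same as membership in some $A_p$ class with $p>1$; the improvement will be extracted by improving $p$ and then translating back to a reverse H\"older exponent strictly larger than $\gamma$.

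Concretely, I would argue in three steps. First, from $V \in \mathcal{B}_\gamma$ I deduce by the implications (3)$\Rightarrow$(1)$\Rightarrow$(4) in Lemma \ref{lemequiv} that $V \in A_p$ for some $p=p(n,\gamma,b_\gamma)>1$, with quantitative $A_p$-constant depending only on $n,\gamma,b_\gamma$. Second, I invoke the well-known self-improving property of the $A_p$ scale: there exists $\eta>0$ such that $V \in A_{p-\eta}$, with the new constant controlled. Third, I apply a quantitative version of the implication (4)$\Rightarrow$(3) of Lemma \ref{lemequiv}: $V \in A_{p-\eta}$ satisfies a reverse H\"older inequality with some exponent $\gamma'>1$, and the sharp Coifman--Fefferman-type relation expressing the reverse H\"older exponent as a function of the $A_p$ characteristic (of the form $\gamma' = 1 + c(n)/[V]_{A_{p-\eta}}$ up to constants) forces $\gamma' > \gamma$ provided $\eta$ is chosen sufficiently small. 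This yields $V \in \mathcal{B}_{\gamma+\epsilon}$ with $\epsilon := \gamma'-\gamma > 0$.

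A more self-contained alternative would be to invoke Gehring's original lemma \cite{Ge} directly on \eqref{VBqclass}: a Calder\'on--Zygmund stopping-time decomposition of $V^\gamma$ on each ball produces a good-$\lambda$ inequality which, after layer-cake integration against $\lambda^{\epsilon-1}$, upgrades the reverse H\"older exponent to $\gamma+\epsilon$ for some $\epsilon=\epsilon(n,\gamma,b_\gamma)>0$ chosen small enough to absorb the tail error.

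The main (and essentially only) obstacle in either route is the quantitative tracking of constants needed to guarantee a strictly positive improvement $\epsilon>0$. In the $A_p$ approach this reduces to the sharp dependence of the reverse H\"older exponent on the $A_p$ characteristic; in Gehring's approach it reduces to absorbing the remainder from the $\lambda$-integration of the good-$\lambda$ inequality. Both are classical and well-documented in the elliptic regularity literature, so no substantive new difficulty arises, and the constant $\epsilon$ will depend only on $n$, $\gamma$, and $b_\gamma$.
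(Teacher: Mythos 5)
Your fallback route---invoking Gehring's lemma directly on \eqref{VBqclass} via a Calder\'on--Zygmund stopping-time/good-$\lambda$ argument---is the standard and correct proof of this lemma, and it does produce $\epsilon=\epsilon(n,\gamma,b_\gamma)>0$ as claimed. The paper itself gives no detailed argument, only the one-sentence hint just before the lemma pointing to Lemma~\ref{lemequiv} and $A_p$ self-improvement, so there is no explicit proof to measure yours against.

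Your primary route through $A_p$, however, has a genuine gap at step~3. After passing $V\in\mathcal{B}_\gamma\Rightarrow V\in A_p\Rightarrow V\in A_{p-\eta}$, the reverse H\"older exponent you recover from Coifman--Fefferman is $\gamma'\approx 1+c(n)/[V]_{A_{p-\eta}}$, and taking $\eta$ small does not make this large: since $[V]_{A_q}$ is \emph{decreasing} in $q$, the quantity $[V]_{A_{p-\eta}}$ only decreases toward $[V]_{A_p}$ as $\eta\to 0^+$, so $\gamma'$ tends to the fixed number $1+c(n)/[V]_{A_p}$, which has no reason whatsoever to exceed $\gamma$. The chain $\mathcal{B}_\gamma\to A_p\to A_{p-\eta}\to\mathcal{B}_{\gamma'}$ simply forgets that the input exponent was $\gamma$; for instance, a weight can lie in $\mathcal{B}_{100}$ while the generic reverse H\"older gain extracted from its $A_p$ characteristic is only $\gamma'=1.01$. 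A correct $A_p$-based argument must self-improve the power $V^\gamma$ rather than $V$ itself: using the relation $V\in A_q\cap\mathcal{B}_\gamma$ if and only if $V^\gamma\in A_{\gamma(q-1)+1}$, one first obtains $V^\gamma\in A_\infty$; then the $A_\infty$-to-reverse-H\"older step gives $V^\gamma\in\mathcal{B}_\tau$ for some $\tau>1$; finally, composing the two reverse H\"older inequalities (for $V$ with exponent $\gamma$, and for $V^\gamma$ with exponent $\tau$) yields $V\in\mathcal{B}_{\gamma\tau}$, i.e.\ $\epsilon=\gamma(\tau-1)>0$. Either patch the $A_p$ route this way, or simply run the Gehring argument you sketched.
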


\subsection{Gradient estimates for equations with mixed data}\ \\
The next two results are local Calder\'on-Zygmund estimates for elliptic equations of $p$-Laplace type involving mixed data. Let us consider the following problem
\begin{equation}
\label{homoeqfF}
\left\{\begin{array}{rclcc}
-\mathrm{div}\, \mathbf{a}(x,Dw)&  =  &f - \mathrm{div}\,(|F|^{p-2}F) & \textrm{ in } & \Omega_{2r}(x_0),  \\
w & = & 0 & \textrm{ on } & \partial_w\Omega_{2r}(x_0)\ \text{if}\ B_{2r}(x_0)\not\subset\Omega.
\end{array}\right.
\end{equation}
Here, the `mixed data' means $f-\mathrm{div}\,(|F|^{p-2}F)$. We note that if $f\equiv 0$, the Calder\'on-Zygmund estimates have been obtained in for instance \cite{BR1,MP1}, and if $F\equiv 0$ and $2-\frac1n <p<n$, these can be found in for instance \cite{Ph1}. From those results, we can expect a similar result for the mixed problem \eqref{homoeqfF}, and  the authors recently obtained the desired one in \cite{LO1}. By the Sobolev's embedding, we consider two cases that $q>\max\{p,\frac{(p-1)n}{n-1}\}$ with $1<p<\infty$ and $p<q\leq \frac{(p-1)n}{n-1}$ with $p>n$.

\begin{theorem}\label{thmDwbdd}
Let $1<p<\infty$ and $q>\max\{p,\frac{(p-1)n}{n-1}\}$. There exists a small $ \delta = \delta(n, L, \nu, p, q) \in (0,\frac18) $ so that if $\mathbf{a}$ is $( \delta, R)$-vanishing and $\Omega$ is a $(\delta,R)$-Reifenberg flat domain for some $R\in(0,1)$, then for any $x_0\in\overline\Omega$, $r\in(0,\frac{R}{2}]$ and weak solution $w\in W^{1,p}(\Omega_{2r}(x_0))$ of \eqref{homoeqfF} with $F\in L^q(\Omega_{2r}(x_0))$ and $f\in L^{(q/(p-1))_*}(\Omega_{2r}(x_0))$, we have

\begin{eqnarray}\label{homoeqestimate}
\nonumber \left(\mint_{\Omega_r(x_0)} |Du|^{q} \, dx\right)^{\frac{1}{q}}   
\nonumber &\leq& c \left( \mint_{\Omega_{2r}(x_0)} |Du|^{p}\, dx\right)^{\frac{1}{p}}+c\left(\mint_{\Omega_{2r}(x_0)} |F|^q \,dx\right)^\frac{1}{q} \\
&&+ c \left(   \mint_{\Omega_{2r}(x_0)} |r f  |^{\left( \frac{q}{p-1}\right)_*} \, dx      \right)^{\frac{1}{\left( \frac{q}{p-1}\right)_*(p-1)}}
\end{eqnarray} 
for some $c=c(n, L,\nu,  p,q)>0.$
\end{theorem}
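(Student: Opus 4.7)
My plan is to establish the estimate by a two-step comparison scheme followed by a level-set/maximal function iteration in the spirit of Acerbi--Mingione and Caffarelli--Peral. Because the datum $f$ appears in non-divergence form while the rest of the equation is in divergence form, I would first rewrite $f$ as a divergence via Sobolev duality: setting $s=q/(p-1)$ and using $W^{-1,s}\hookleftarrow L^{s_*}$, one has locally $f=\mathrm{div}\, G$ with
\[
\left( \mint_{\Omega_{2r}(x_0)} |G|^{s}\, dx\right)^{\frac{1}{s}} \lesssim r \left( \mint_{\Omega_{2r}(x_0)} |f|^{s_*}\, dx\right)^{\frac{1}{s_*}}.
\]
This reduces the problem to a purely divergence-form equation with effective datum $-\mathrm{div}\,(|F|^{p-2}F-G)$ and explains the exponent $(q/(p-1))_{*}$ appearing in \eqref{homoeqestimate}.

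\emph{Step 1 (homogeneous comparison).} On each sub-ball $B_\rho(y)\subset \Omega_{2r}(x_0)$ (or on $\Omega_\rho(y)$ near the boundary) let $v$ solve $-\mathrm{div}\,\ba(x,Dv)=0$ with the same boundary data as $w$. Testing with $w-v$ and using the monotonicity \eqref{mono}--\eqref{mono1} yields
\[
\mint_{B_\rho(y)} |Dw-Dv|^{p}\, dx \lesssim \mint_{B_\rho(y)} |F|^{p} + |G|^{\frac{p}{p-1}}\, dx,
\]
with the standard H\"older-splitting modification when $1<p<2$.

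\emph{Step 2 (frozen-coefficient comparison).} Let $\bar v$ solve $-\mathrm{div}\,\overline{\ba}_{B_\rho(y)}(D\bar v)=0$ with the boundary data of $v$. Using Gehring-type higher integrability of $Dv$ together with the $(\delta,R)$-vanishing condition, one derives
\[
\mint_{B_\rho(y)} |Dv - D\bar v|^{p}\, dx \lesssim \delta^{\beta} \mint_{B_\rho(y)} |Dv|^{p}\, dx
\]
for some $\beta=\beta(n,p,L,\nu)>0$. In the interior case, classical $C^{1,\alpha}$ theory of Uhlenbeck/DiBenedetto yields the $L^{\infty}$ gradient bound $\|D\bar v\|_{L^{\infty}(B_{\rho/2}(y))}^{p} \lesssim \mint_{B_\rho(y)} |D\bar v|^{p}\, dx$. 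At the boundary, the Reifenberg flatness lets one replace $\Omega\cap B_\rho(y)$ by a half-ball $B_\rho^{+}$ (with an error of order $\delta$) and apply the corresponding boundary Lipschitz regularity for the autonomous $p$-Laplace-type operator on a half-ball with zero Dirichlet data.

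\emph{Step 3 (good-$\lambda$/Vitali iteration).} Combining Steps 1 and 2, the Hardy--Littlewood maximal function $\mc M(|Dw|^{p})$ satisfies a good-$\lambda$ inequality of the form
\[
\bigl|\{ \mc M(|Dw|^p)>A\lambda,\ \mc M(|F|^p+|G|^{\frac{p}{p-1}})\leq \eta\lambda\}\cap \Omega_r(x_0)\bigr| \leq \epsilon\, |\{\mc M(|Dw|^p)>\lambda\}|,
\]
for suitable $A,\eta$ and arbitrarily small $\epsilon$ (by choosing $\delta$ small). Multiplying by $\lambda^{q/p-1}$, integrating, and using the $L^{q/p}$-boundedness of $\mc M$ produces the $L^{q}$ bound for $|Dw|$; translating $G$ back to $f$ via the Sobolev inequality displayed above gives exactly \eqref{homoeqestimate}. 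The main obstacle is the boundary version of Step 2: the $(\delta,R)$-Reifenberg geometry is not a Lipschitz graph, so standard flattening is unavailable, and one has to transfer the zero Dirichlet condition onto a bona fide half-ball problem while keeping the substitution error of order $\delta^{\beta}$. Obtaining the global-Lipschitz bound for $D\bar v$ up to the flat portion of the boundary (for the autonomous operator $\overline{\ba}_{B_\rho(y)}$) is the critical ingredient, and is the point where the smallness of $\delta$ and the precise Reifenberg geometry must be balanced most delicately.
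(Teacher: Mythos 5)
The paper does not prove Theorem~\ref{thmDwbdd}; it is quoted verbatim from the authors' companion work \cite{LO1}, and the text preceding it explicitly says so. So there is no in-paper proof to compare against, and the most useful thing I can do is assess your proposal on its own merits and against the structural evidence the paper offers about how \cite{LO1} must proceed.

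Your divergence reduction is sound for the range stated here. The hypothesis $q>\max\{p,\frac{(p-1)n}{n-1}\}$ is exactly what makes $s:=\frac{q}{p-1}>\frac{n}{n-1}$, hence $s_{*}:=\frac{ns}{n+s}>1$, so the Newtonian-potential construction $G=D(N*f\chi)$ with the Calder\'on--Zygmund bound $\|D^2(N*f\chi)\|_{L^{s_*}}\lesssim\|f\|_{L^{s_*}}$ and Sobolev embedding $W^{1,s_*}\hookrightarrow L^{s}$ is admissible, and the scaling factor $r$ in your inequality is correct (one checks $n(\frac{1}{s_*}-\frac{1}{s})=1$). Tracking exponents, a pure divergence-form Calder\'on--Zygmund estimate with right-hand side $H:=G+|F|^{p-2}F$ gives $\bigl(\mint|Dw|^q\bigr)^{1/q}\lesssim\bigl(\mint|Dw|^p\bigr)^{1/p}+\bigl(\mint|H|^{s}\bigr)^{1/q}$, and the $G$-contribution is $\bigl[\bigl(\mint|G|^s\bigr)^{1/s}\bigr]^{1/(p-1)}\lesssim\bigl(\mint|rf|^{s_*}\bigr)^{1/(s_*(p-1))}$, which is precisely the third term in \eqref{homoeqestimate}. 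With this reduction, Theorem~\ref{thmDwbdd} collapses to the already-known divergence-form nonlinear Calder\'on--Zygmund theory on Reifenberg flat domains (\cite{BR1,MP1}), so your Steps 2--3 are really just re-deriving that known theory rather than proving anything new.

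However, this route cannot be what \cite{LO1} actually does, and the evidence is already in the paper in front of you: the immediately following Theorem~\ref{thmDwbdd1} covers the complementary range $p<q\leq\frac{(p-1)n}{n-1}$ (for $p>n$), where $q/(p-1)\leq\frac{n}{n-1}$ forces $(q/(p-1))_{*}\leq 1$, so no Lebesgue space $L^{s_*}$ exists and your Sobolev-duality representation $f=\mathrm{div}\,G$ is simply unavailable. The authors therefore must handle the $f$-term directly as a non-divergence datum (which is presumably the meaning of ``mixed data'' in the title of \cite{LO1}), and they need a unified argument that works in both regimes. Your approach is a correct shortcut for the particular Theorem~\ref{thmDwbdd}, but it does not reflect the route of the source paper, and--crucially--it would fail outright if one tried to use it for the companion Theorem~\ref{thmDwbdd1}.
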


\begin{theorem}\label{thmDwbdd1}
Let $n<p<\infty$, $p<q\leq \frac{(p-1)n}{n-1}$ and $1<\tilde q <n$. There exists a small $\delta = \delta(n, L, \nu, p, q)\in (0,\frac18) $ so that if $\mathbf{a}$ is $( \delta, R)$-vanishing, $\Omega$ is a $(\delta,R)$-Reifenberg flat for some $R\in(0,1)$, then for any $x_0\in\overline\Omega$, $r\in(0,\frac{R}{2}]$ and for any weak solution $w\in W^{1,p}(\Omega_{2r}(x_0))$ of  \eqref{homoeqfF} with $F\in L^q(\Omega_{2r}(x_0))$ and $f\in L^{\tilde q}(\Omega_{2r}(x_0))$, we have 
\begin{eqnarray*}
\left(\mint_{\Omega_r(x_0)} |Dw|^{q} \, dx\right)^{\frac{1}{q}}   &\leq& c \left( \mint_{\Omega_{2r}(x_0)} |Dw|^{p}\, dx\right)^{\frac{1}{p}}+c\left(\mint_{\Omega_{2r}(x_0)} |F|^q \,dx\right)^\frac{1}{q} \\
&&+ c \left(   \mint_{\Omega_{2r}(x_0)} |r f  |^{\tilde q} \, dx      \right)^{\frac{1}{\tilde q(p-1)}}
\end{eqnarray*}
for some constant $c=c(n,  L,\nu, p,q, \tilde q)>0.$
\end{theorem}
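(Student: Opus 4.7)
The plan is to mirror the perturbation/comparison argument used for Theorem~\ref{thmDwbdd}, changing only the way the right-hand side $f$ is controlled. In the new regime $p>n$ the Sobolev embedding $W^{1,p}_{0}\hookrightarrow L^{\infty}$ is available, and it is precisely this embedding that allows $\tilde q$ to be any exponent in $(1,n)$ and removes the barrier $q>(p-1)n/(n-1)$ that arose in Theorem~\ref{thmDwbdd} from the Sobolev-conjugate exponent $(q/(p-1))_{*}$.

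Concretely, for each concentric ball $B_{4\rho}(z)\cap\Omega\subset\Omega_{2r}(x_{0})$ (interior or boundary, in the latter case handled by the Reifenberg-flat reduction to a half-space), I would introduce the comparison map $v$ solving the frozen-coefficient, zero-data problem
$$-\mathrm{div}\,\overline{\mathbf{a}}_{B_{4\rho}(z)}(Dv)=0\ \ \text{in}\ \ B_{4\rho}(z)\cap\Omega,\qquad v=w\ \ \text{on}\ \ \partial(B_{4\rho}(z)\cap\Omega),$$
and test the equation for $w-v\in W^{1,p}_{0}(B_{4\rho}(z)\cap\Omega)$ with $\varphi=w-v$. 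The $F$-term and the $(\delta,R)$-vanishing commutator are treated exactly as in Theorem~\ref{thmDwbdd}. For the new $f$-term I would use H\"older together with $\|w-v\|_{L^{\infty}}\le c\,\rho^{1-n/p}\|D(w-v)\|_{L^{p}}$ coming from $p>n$ to obtain
$$\Big|\int f(w-v)\,dx\Big|\le c\,\rho^{n/\tilde q'+1-n/p}\,\|f\|_{L^{\tilde q}}\,\|D(w-v)\|_{L^{p}},$$
and then apply Young's inequality with exponents $p,\,p/(p-1)$. A short exponent bookkeeping shows all the $\rho$-powers collapse to $p/(p-1)$, which together with the averaging yields the comparison estimate
$$\Big(\mint_{B_{4\rho}(z)\cap\Omega}|D(w-v)|^{p}\,dx\Big)^{1/p}\le c\,\delta^{\frac{1}{p-1}}\Big(\mint|Dv|^{p}\,dx\Big)^{1/p}+c\Big(\mint|F|^{p}\,dx\Big)^{1/p}+c\Big(\mint|\rho f|^{\tilde q}\,dx\Big)^{\frac{1}{\tilde q(p-1)}}.$$

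From here the proof proceeds along the standard lines of \cite{AM1,Min1}. Classical regularity for the homogeneous frozen-coefficient equation on the (half-)ball yields the local Lipschitz estimate $\|Dv\|_{L^{\infty}(B_{2\rho}(z)\cap\Omega)}\le c\big(\mint_{B_{4\rho}(z)\cap\Omega}|Dv|^{p}\,dx\big)^{1/p}$, which supplies the a priori higher-integrability surplus on $v$. Feeding this into a level-set / good-$\lambda$ Vitali covering for the quantity $|Dw|^{p}+|F|^{p}+|\rho f|^{\tilde q\cdot p/(\tilde q(p-1))}$, and choosing $\delta$ small in terms of $q$ so that the $\delta^{1/(p-1)}$-term is absorbable, one arrives at the claimed $L^{q}$-bound after iterating over levels.

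The main obstacle, as I see it, is the exponent bookkeeping in the comparison step: verifying that after Young's inequality and averaging, the $\rho$-powers combine so that the $f$-contribution fits precisely into the scaling $(\mint|\rho f|^{\tilde q})^{1/(\tilde q(p-1))}$ that the good-$\lambda$ iteration demands (this is what ultimately produces the exponent $1/(\tilde q(p-1))$ in the theorem). A secondary technicality, already present in the proof of Theorem~\ref{thmDwbdd}, is that the boundary comparison must be carried out on a half-space after a Reifenberg flattening; this part is identical to the earlier result and introduces no essentially new difficulty here.
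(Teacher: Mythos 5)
The paper does not actually prove Theorem~\ref{thmDwbdd1} (nor Theorem~\ref{thmDwbdd}): immediately above the statements it says ``the authors recently obtained the desired one in \cite{LO1}'', i.e.\ both theorems are imported verbatim from the authors' earlier paper in \emph{Rev.\ Mat.\ Iberoam.} So there is no in-paper proof to compare against; I can only assess your sketch on its own terms.

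On the substance, you have correctly isolated why this theorem is stated separately and in the regime $n<p<\infty$, $p<q\le\frac{(p-1)n}{n-1}$: when $q\le\frac{(p-1)n}{n-1}$ the Sobolev-conjugate exponent $(q/(p-1))_*$ used in Theorem~\ref{thmDwbdd} drops to or below $1$, so that route for dualizing the $f$-term becomes unavailable, whereas $p>n$ supplies the Morrey embedding $W^{1,p}_{0}\hookrightarrow L^{\infty}$, which controls $\langle f, w-v\rangle$ for any $f\in L^{\tilde q}$ with $\tilde q>1$. Your $\rho$-bookkeeping is correct: the exponent $\frac{n}{\tilde q'}+1-\frac np$ times $\frac{p}{p-1}$, combined with the $\rho^{n/\tilde q}$ that appears when passing to averaged integrals, collapses to $\frac{p}{p-1}$, which after the $1/p$-power yields exactly $\bigl(\mmint|\rho f|^{\tilde q}\bigr)^{1/(\tilde q(p-1))}$. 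One technical caveat worth flagging: you compare $w$ directly to the frozen-coefficient solution $v$; in the Acerbi--Mingione/Byun--Wang machinery this is usually done in two steps (first to the solution $h$ of $-\mathrm{div}\,\ba(x,Dh)=0$ with $h=w$ on the boundary, then from $h$ to the frozen-coefficient $v$), because the commutator bound $\int|\Theta(\ba,B_{4\rho})|\,|Dh|^{p-1}|D(h-v)|\,dx$ is decoupled via a reverse-H\"older (self-improving) inequality for $Dh$, which holds for solutions of the \emph{homogeneous} equation but is not immediate for $w$ in the presence of the sources $f$ and $F$. This is a fixable organizational point rather than a gap in the idea, but as written the single-step comparison glosses over how the $\delta$-smallness is actually extracted. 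Otherwise the plan matches what one would expect the cited proof in \cite{LO1} to look like.
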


\subsection{Auxiliary lemmas}\ \\
 We first recall the local boundedness (up to boundaries) for weak solutions to the equation \eqref{maineq} with $F\equiv 0$, which is a classical regularity result and we refer to \cite[Chapter 2.5]{LU} and \cite[Chapter 7]{Gi}. We point out that Reifenberg flat domains $\Omega$ considered in this paper have the measure density conditions \eqref{dencon} and \eqref{dencon1}, which are enough to obtain the boundedness for weak solutions.

\begin{lemma}\label{supvlem}
Let $1<p<\infty$ and suppose that the bounded domain $\Omega\subset\mr^n$ is $(\delta,R)$-Reifenberg flat for some $\delta\in(0,1/2)$ and $R>0$. Assume that $\mathbf{a}$ satisfies
\begin{equation}\label{aAss}
|\ba(x,\xi)|\leq L|\xi|^{p-1} \ \ \text{and}\ \ \ba(x,\xi)\cdot \xi\geq \nu |\xi|^p
\end{equation}
for any $x,\xi\in \mr^n$ and for some $0<\nu\leq L$, and that the nonnegative function $V$ satisfies $V \in L^{\gamma}(\Omega)$ for some $\gamma \in (\frac{n}{p}, n)$ when $p<n$ and for some $\gamma>1$ when $p\geq n.$  Then for any ball $B_{2r}(x_0)$ with $x_0\in\overline\Omega$ and $r\in(0,\frac{R}2]$ satisfying $ (2r)^{p - \frac{n}{\gamma}} \Vert V \Vert_{L^{\gamma}(\Omega_{2r}(x_0))} \leq 1,$ 
and  for any weak solution $w\in W^{1,p}(\Omega_{2r}(x_0))$ of
$$
\left\{\begin{array}{rclcc}
-\mathrm{div}\, \mathbf{a}(x,Dw) + V |w|^{p-2}w&  =  &0 & \textrm{ in } & \Omega_{2r}(x_0),  \\
w & = & 0 & \textrm{ on } &  \partial_w\Omega_{2r}(x_0)\ \text{if}\ B_{2r}(x_0)\not\subset\Omega,\end{array}\right.
$$
we have  that
\begin{equation*}
\|w\|_{L^\infty(\Omega_r (x_0))} \leq c \left( \mint_{\Omega_{2r}(x_0)} |w|^p \, dx \right)^{\frac{1}{p}}
\end{equation*}
for some constant $c = c(n, p, L, \nu,\gamma) >0.$
\end{lemma}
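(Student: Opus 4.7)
Since $V\geq 0$, the strategy is to reduce the claim to the classical local boundedness of subsolutions of a pure $p$-Laplace type equation and then apply Moser iteration. The key observation is that any nonnegative test function $\varphi\in W^{1,p}_0(\Omega_{2r}(x_0))$ supported in $\{w\geq 0\}$ yields $\int V|w|^{p-2}w\,\varphi\,dx\geq 0$, so this contribution may be dropped on the good side of the weak formulation \eqref{weakform}. Consequently, $w_+$ is a weak subsolution of $-\mathrm{div}\,\mathbf{a}(x,Dw_+)=0$ in $\Omega_{2r}(x_0)$, and, after replacing $\mathbf{a}$ by $\tilde{\mathbf{a}}(x,\xi):=-\mathbf{a}(x,-\xi)$, which satisfies the same structure conditions \eqref{aAss}, so is $w_-$. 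The Dirichlet condition $w=0$ on $\partial_w\Omega_{2r}(x_0)$ is inherited by both $w_+$ and $w_-$, enabling the extension-by-zero technique when $B_{2r}(x_0)\not\subset\Omega$.

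\textbf{Key steps.} For $\kappa\geq 0$, $\beta\geq 1$, and a smooth cutoff $\eta\in C_c^\infty(B_{\rho'}(x_0))$ with $\eta\equiv 1$ on $B_\rho(x_0)$ and $r\leq\rho<\rho'\leq 2r$, I would test the subsolution inequality for $w_+$ with $\varphi=\eta^p(w-\kappa)_+^{p(\beta-1)+1}$. Combining ellipticity \eqref{aas2}, growth \eqref{aas1} (in the coercive form \eqref{aAss}) and Young's inequality produces a Caccioppoli estimate controlling $\int_{\Omega_\rho(x_0)}|D[(w-\kappa)_+^\beta]|^p\,dx$ by a constant, with polynomial dependence on $\beta$, times $(\rho'-\rho)^{-p}\int_{\Omega_{\rho'}(x_0)}(w-\kappa)_+^{p\beta}\,dx$. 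Extending $(w-\kappa)_+$ by zero to $B_{2r}(x_0)\setminus\Omega$, which is legitimate because of the Dirichlet condition, places it in $W^{1,p}_0(B_{2r}(x_0))$, so the Sobolev inequality on the whole ball $B_{2r}(x_0)$ upgrades the gradient bound to an $L^{p^*\beta}$ bound (with $p^*$ the usual Sobolev exponent, treating $p\geq n$ by picking any large finite substitute). Iterating along $\beta_m=(p^*/p)^m$ and $\rho_m=r(1+2^{-m})$ in the standard Moser scheme yields
\begin{equation*}
\|w_+\|_{L^\infty(\Omega_r(x_0))}\leq c\left(\mint_{\Omega_{2r}(x_0)}|w_+|^p\,dx\right)^{1/p},
\end{equation*}
and the analogous bound for $w_-$ follows by symmetry; together they deliver the claim.

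\textbf{Main obstacle.} The only genuine subtlety is the boundary case $B_{2r}(x_0)\not\subset\Omega$, where no ambient Sobolev inequality is available on $\Omega_{2r}(x_0)$ itself. The extension-by-zero remedies this, but to compare volume averages over $B_{2r}(x_0)$ with those over $\Omega_{2r}(x_0)$ without degrading the Moser iteration constant, one needs the uniform measure density bounds \eqref{dencon}--\eqref{dencon1}, which are exactly what the $(\delta,R)$-Reifenberg flatness provides; these replace the smoothness hypotheses used in the classical treatments \cite[Chapter 2.5]{LU} and \cite[Chapter 7]{Gi}. The smallness condition $(2r)^{p-n/\gamma}\|V\|_{L^\gamma(\Omega_{2r}(x_0))}\leq 1$ does not enter this particular argument, since $V\geq 0$ already allows the potential term to be discarded; it is kept in the statement for consistency with the comparison estimates in Section~\ref{secComestimates} where such quantitative control on $V$ is genuinely needed.
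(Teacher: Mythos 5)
Your proof is correct, but it takes a genuinely different route from the paper. The paper's proof first rescales, setting $\tilde{\mathbf{a}}(x,\xi)=\mathbf{a}(rx,\xi)$, $\tilde w(x)=w(rx)/r$, $\tilde V(x)=r^pV(rx)$, and then invokes the classical local boundedness result of \cite[Chapter~2.5]{LU} and \cite[Chapter~7]{Gi} \emph{for the equation with the potential term retained}, whose constant $c$ depends on $\Vert\tilde V\Vert_{L^\gamma(\tilde\Omega_2(x_0))}$; the hypothesis $(2r)^{p-\frac{n}{\gamma}}\Vert V\Vert_{L^\gamma(\Omega_{2r}(x_0))}\le 1$ is used precisely to guarantee $\Vert\tilde V\Vert_{L^\gamma}\le 1$ after rescaling, so that this dependence can be absorbed into a uniform constant. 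Your proof instead exploits the sign $V\ge 0$ to discard the zeroth-order term entirely: testing with $\varphi=\eta^p(w-\kappa)_+^{p(\beta-1)+1}$, $\kappa\ge 0$, the support lies in $\{w>\kappa\}\subset\{w\ge 0\}$ where $V|w|^{p-2}w\,\varphi\ge 0$, so the Caccioppoli inequality for a pure $\mathbf{a}$-subsolution follows, and Moser iteration (with extension by zero and the measure density bounds \eqref{dencon}--\eqref{dencon1} at the boundary) does the rest. This buys you two things the paper's proof does not: the smallness condition on $V$ becomes superfluous for this estimate, and the resulting constant depends only on $n,p,L,\nu$ and is in fact independent of $\gamma$ and of $V$ altogether. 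The cost is that you must carry out the Moser iteration yourself rather than appealing to the literature as a black box, including the standard truncation step needed to justify the test function $\varphi$ when $w$ is not known a priori to be bounded; but this is routine and you have identified the only real issue (the boundary case) and addressed it correctly.
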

\begin{proof}
Let us define the rescaled maps 
$$ \tilde{\mathbf{a}}(x,\xi) = \mathbf{a}(rx, \xi),\ \tilde{w}(x) = \frac{w(rx)}{r}, \ \tilde{V}(x) = r^p V(rx), \text{ and } \tilde{\Omega} = \left\{ \frac{x}{r} : x \in \Omega  \right\}.$$
Then one can check that $ \tilde{\mathbf{a}}$ satisfies the assumption \eqref{aAss} with the same constants $L$ and $\nu$, $\tilde{\Omega}$ is $(\delta,\frac{R}{r})$-Reifenberg flat, $\tilde{V} \in L^{\gamma}(\tilde{\Omega})$,  and $\tilde{w} \in W^{1,p}(\Omega_{2}(x_0)) $ is a weak solution of 
$$
\left\{\begin{array}{rclcc}
-\mathrm{div}\, \tilde{\mathbf{a}}(x,D\tilde{w}) + \tilde{V} |\tilde{w}|^{p-2}\tilde{w}&  =  &0 & \textrm{ in } & \tilde{\Omega}_{2}(x_0),  \\
\tilde{w} & = & 0 & \textrm{ on } &  \partial_w\tilde{\Omega}_{2}(x_0)\ \text{if}\ B_{2}(x_0)\not\subset\tilde{\Omega}. \end{array}\right.
$$
By the classical local boundedness result (see, for instance, \cite[Chapter 2.5]{LU} and \cite[Chapter 7]{Gi}),
we see that 
\begin{equation*}
\|\tilde{w}\|_{L^\infty(\tilde{\Omega}_1 (x_0))} \leq c \left( \mint_{\tilde{\Omega}_{2}(x_0)} |\tilde{w}|^p \, dx \right)^{\frac{1}{p}}
\end{equation*}
for some constant $c = c(n, p, L, \nu,\gamma, \Vert \tilde{V} \Vert_{L^{\gamma}(\tilde{\Omega}_{2}(x_0))}) >0.$ Here, since the constant $c$ in the above estimate is increasing as a function of $\Vert \tilde{V} \Vert_{L^{\gamma}(\tilde{\Omega}_{2}(x_0))}$ and 
$$  \Vert \tilde{V} \Vert_{L^{\gamma}(\tilde{\Omega}_{2}(x_0))} \leq (2r)^{p - \frac{n}{\gamma}} \Vert V \Vert_{L^{\gamma}(\Omega_{2r}(x_0))} \leq 1,$$
$c$ can be replaced by a larger constant independent of $\Vert \tilde{V} \Vert_{L^{\gamma}(\tilde{\Omega}_{2}(x_0))}$.
Therefore, after scaling back, we can arrive at the desired bound of $w$.
\end{proof}

The following is the standard iteration lemma, whose proof can be found in for instance \cite{HL1}.
\begin{lemma}\label{teclem}
Let $ g :[a,b] \to \mr $ be a bounded nonnegative function. Suppose that for any $s_1,s_2$ with  $ 0< a \leq s_1 < s_2 \leq b $,
$$
g(s_1) \leq \tau g(s_2) + \frac{A}{(s_2-s_1)^{\beta}}+B
$$
where $A,B \geq 0, \beta >0$ and $0\leq \tau <1$. Then we have
$$
g(s_1) \leq c\left(  \frac{A}{(s_2-s_1)^{\beta}}+ B \right)
$$
for some constant $c=c(\beta, \tau) >0$.
\end{lemma}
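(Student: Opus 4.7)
The plan is to apply the hypothesis iteratively along a geometric sequence of points accumulating at $s_2$, chosen so that the resulting geometric series in $\tau$ and the one coming from the $(s_2-s_1)^{-\beta}$ terms both converge. Fix $s_1, s_2$ with $a \leq s_1 < s_2 \leq b$ and pick a parameter $\lambda \in (0,1)$ to be specified later. I would define inductively $t_0 := s_1$ and
$$
t_{i+1} := t_i + (1-\lambda)\lambda^i (s_2 - s_1),
$$
so that $t_i = s_1 + (1-\lambda^i)(s_2 - s_1)$, which is strictly increasing and converges to $s_2$ as $i\to\infty$. In particular, all $t_i$ lie in $[s_1, s_2] \subset [a,b]$.

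Next I would apply the given inequality with $s_1, s_2$ replaced by $t_i, t_{i+1}$ and iterate. Since $t_{i+1} - t_i = (1-\lambda)\lambda^i(s_2-s_1)$, one step gives
$$
g(t_i) \leq \tau g(t_{i+1}) + \frac{A}{(1-\lambda)^\beta \lambda^{i\beta}(s_2-s_1)^\beta} + B.
$$
Iterating this bound $k$ times yields
$$
g(s_1) = g(t_0) \leq \tau^k g(t_k) + \frac{A}{(1-\lambda)^\beta (s_2-s_1)^\beta}\sum_{i=0}^{k-1}(\tau \lambda^{-\beta})^i + B\sum_{i=0}^{k-1}\tau^i.
$$

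At this point the key choice is to take $\lambda \in (\tau^{1/\beta},1)$, which forces $\tau \lambda^{-\beta} < 1$ so the first geometric sum is summable; the second sum is summable since $\tau < 1$. Using boundedness of $g$ and the fact that $\tau^k \to 0$, letting $k\to\infty$ produces
$$
g(s_1) \leq \frac{A}{(1-\lambda)^\beta(1-\tau\lambda^{-\beta})(s_2-s_1)^\beta} + \frac{B}{1-\tau},
$$
which is exactly the desired estimate with $c = c(\beta,\tau) = \max\bigl\{(1-\lambda)^{-\beta}(1-\tau\lambda^{-\beta})^{-1},\,(1-\tau)^{-1}\bigr\}$ after fixing $\lambda$, say $\lambda = \tfrac{1+\tau^{1/\beta}}{2}$. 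There is no real obstacle here; the only point requiring a moment of care is verifying that the geometric sequence $\{t_i\}$ stays inside the interval where the hypothesis is valid and that boundedness of $g$ is used precisely to kill the $\tau^k g(t_k)$ term in the limit.
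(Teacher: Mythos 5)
Your proof is correct, and since the paper itself gives no proof of Lemma~\ref{teclem} but simply refers to \cite{HL1} for it, you are reproducing exactly the standard iteration argument found there: build a geometric sequence of intermediate points between $s_1$ and $s_2$, iterate the assumed estimate, and choose the ratio $\lambda\in(\tau^{1/\beta},1)$ so that the geometric series with ratio $\tau\lambda^{-\beta}$ converges, using boundedness of $g$ to discard the $\tau^k g(t_k)$ tail. This is the same route the cited reference takes, and there is nothing to add.
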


We end this section by introducing a basic inequality which will be used later. Although its proof is elementary, we shall give it in detail  for the sake of readability.
\begin{lemma}\label{lemineq} For any function  $g\in W^{1,p}(B_r)$ with any $r>0,$ we have
$$
\frac{1}{r^{n+p}}\int_{B_r} \int_{B_r} \left| g(x)-g(y) \right|^p \,dx dy  \leq c \int_{B_r} \left|  Dg(x) \right|^p \,dx
$$
for some $c=c(n,p)>0$.
\end{lemma}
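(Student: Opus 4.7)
The plan is to reduce to smooth functions by density and then use a standard Fubini-with-change-of-variables argument that expresses the difference $g(x)-g(y)$ as an integral of the gradient along the segment joining $x$ and $y$.

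First, I would take $g \in C^\infty(\overline{B_r})$ (the general case follows by approximation in $W^{1,p}(B_r)$). For $x,y \in B_r$, the fundamental theorem of calculus along the segment $t \mapsto tx+(1-t)y$ gives
$$
g(x)-g(y) = \int_{0}^{1} Dg\bigl(tx+(1-t)y\bigr)\cdot (x-y)\, dt,
$$
so Jensen's inequality yields
$$
|g(x)-g(y)|^{p} \leq |x-y|^{p} \int_{0}^{1} \bigl|Dg\bigl(tx+(1-t)y\bigr)\bigr|^{p}\, dt.
$$
Since $|x-y|\leq 2r$ on $B_r\times B_r$, it remains to show
$$
I := \int_{0}^{1}\!\!\int_{B_r}\!\!\int_{B_r} \bigl|Dg\bigl(tx+(1-t)y\bigr)\bigr|^{p}\, dx\, dy\, dt \leq c(n)\,|B_r|\int_{B_r}|Dg|^{p}\, dz.
$$

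Next I would split $I = I_1 + I_2$ at $t=\tfrac12$ and handle each piece by a linear change of variables. On $\{t\geq \tfrac12\}$, for fixed $y$, the map $x\mapsto z=tx+(1-t)y$ has Jacobian $t^{n}\geq 2^{-n}$ and sends $B_r$ into $B_r$ (convex combination of points of $B_r$), so
$$
\int_{B_r}\bigl|Dg(tx+(1-t)y)\bigr|^{p}\, dx \leq \frac{1}{t^{n}}\int_{B_r}|Dg(z)|^{p}\, dz \leq 2^{n}\int_{B_r}|Dg|^{p}\, dz,
$$
and integrating in $y$ and $t$ gives $I_2 \leq 2^{n-1}|B_r|\int_{B_r}|Dg|^{p}\, dz$. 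The symmetric argument (changing variables in $y$ for fixed $x$) handles $I_1$ on $\{t\leq \tfrac12\}$. Combining the two pieces and recalling $|B_r|=\omega_n r^n$ yields
$$
\int_{B_r}\!\int_{B_r}|g(x)-g(y)|^{p}\, dx\, dy \leq c(n,p)\, r^{n+p}\int_{B_r}|Dg|^{p}\, dz,
$$
which is the claim after dividing by $r^{n+p}$.

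There is no real obstacle here; the only point that requires a sentence of justification is that $tx+(1-t)y$ remains in $B_r$ whenever $x,y\in B_r$ and $t\in[0,1]$, so the change of variables stays inside the domain of $Dg$. The constant $c$ depends only on $n$ and $p$ (through Jensen), matching the statement.
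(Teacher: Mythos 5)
Your proof is correct and follows essentially the same approach as the paper: represent $g(x)-g(y)$ via the fundamental theorem of calculus along the segment joining $x$ and $y$, apply Jensen's inequality, and bound the resulting triple integral by a linear change of variables that exploits convexity of $B_r$. Your split at $t=\tfrac12$ with a single Jacobian-bounded substitution in each half is a clean variant of the paper's two-step substitution $\tilde x = x-y$ followed by $\tilde y = t\tilde x + y$.
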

\begin{proof}
Without loss of generality, we shall assume that $g\in C^1(B_r).$  Using H\"older's inequality, Fubini's theorem and the fact that $|x-y|\leq 2r,$ we observe that
\begin{eqnarray*}
\int_{B_r} \int_{B_r} \left| g(x)-g(y) \right|^p \,dx dy &=& \int_{B_r} \int_{B_r} \left| \int_0^1Dg(t(x-y)+y)\cdot (x-y)\,dt \right|^p\, dxdy\\
&\leq & (2r)^p\int_0^1 \int_{B_r} \int_{B_r}  |Dg(t(x-y)+y)|^p\, dx dy dt.
\end{eqnarray*}
Here we point out that $t(x-y)+y\in B_r$ for any $x,y\in B_r$. Then we use a change of variable with $x=\tilde x+y$ and apply Fubini's theorem to obtain  that
\begin{eqnarray*}
\int_{B_r} \int_{B_r} \left| g(x)-g(y) \right|^p \,dx dy &\leq & (2r)^p\int_0^1 \int_{B_r} \int_{B_r(-y)}  |Dg(t\tilde x+y)|^p\, d\tilde x dy dt\\
&\leq & (2r)^p\int_0^1 \int_{B_{2r}} \int_{B_{r-\frac{|\tilde x|}{2}}(-\frac{\tilde x}{2})}  |Dg(t\tilde x+y)|^p\, dy d\tilde x dt.
\end{eqnarray*}
Note that $B_{r-\frac{|\tilde x|}{2}}(t\tilde x-\frac{\tilde x}{2})\subset B_r$ for any $\tilde x\in B_{2r}$ and any $t\in (0,1).$ Hence, by letting $\tilde y=t\tilde x+y$, we have
$$
\int_{B_{r-\frac{|\tilde x|}{2}}(-\frac{\tilde x}{2})}  |Dg(t\tilde x+y)|^p\, dy= \int_{B_{r-\frac{|\tilde x|}{2}}(t\tilde x-\frac{\tilde x}{2})}  |Dg(\tilde y)|^p\, d\tilde y \leq \int_{B_{r}}  |Dg(\tilde y)|^p\, d\tilde y,
$$
which implies that
$$
\int_{B_r} \int_{B_r} \left| g(x)-g(y) \right|^p \,dx\, dy \leq  (2r)^{n+p}|B_1|\int_{B_{r}}  |Dg(\tilde y)|^p\, d\tilde y.
$$
This completes the proof.
\end{proof}

\section{Gradient estimates for homogenous equations}\label{sechomo}

In this section we obtain gradient estimates for weak solutions to localized equations of \eqref{maineq} with $F\equiv 0$. Let us start with the following lemma, which is in fact a key lemma in  our proofs.

\begin{lemma}\label{lemrVbddpq}
Let $1 < p < \infty$ and suppose $V \in \mathcal{B}_{\gamma}$ for some $ \gamma \in [ \frac{n}{p}, n)$ when $p<n$ and for some $\gamma \in (1,n)$ when $p\geq n.$
Then for any function $w \in W^{1,p}(B_{r})$ with $0<r<1,$  we have
\begin{eqnarray}\label{prVbddpq}
\nonumber \left( \mint_{B_{r}}\left| \frac{w}{r}\right|^{p} \, dx\right)^{\frac1p} \left( \mint_{B_{r}} [r^pV]^{\gamma}\, dx\right)^{\frac{1}{p\gamma }} &\leq& c\max\left\{\left( \mint_{B_{r}} [r^pV]^{\gamma}\, dx\right)^{\frac{1}{p\gamma }},1\right\} \\
&&\hspace{-1.5cm}\times\,  \left[ \left( \mint_{B_{r}} \left|  Dw \right|^p \,dx\right)^{\frac1p} +   \left(\mint_{B_{r}} V\left|  w \right|^p  \,dx\right)^{\frac1p} \right]
\end{eqnarray}

for some constant $c=c\left(n,p, b_\gamma \right)>0.$
\end{lemma}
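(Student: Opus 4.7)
The plan is to split $w = (w-w_{B_r}) + w_{B_r}$, handle the oscillation part by the classical Poincar\'e inequality, and exploit the reverse H\"older inequality from $V \in \mathcal{B}_\gamma$ to pass from the $L^\gamma$ average of $V$ down to its $L^1$ average when estimating the mean-value part. Abbreviate $M := (\mint_{B_r}[r^p V]^\gamma\,dx)^{1/(p\gamma)}$, $B := (\mint_{B_r}|Dw|^p\,dx)^{1/p}$, and $E := (\mint_{B_r}V|w|^p\,dx)^{1/p}$. By the triangle inequality and Poincar\'e, $(\mint_{B_r}|w/r|^p\,dx)^{1/p} \leq cB + |w_{B_r}|/r$, so after multiplying by $M$ the whole task reduces to bounding $|w_{B_r}|M/r$ by $c(BM + E)$.

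For the mean-value part, invoking the $\mathcal{B}_\gamma$ reverse H\"older inequality yields $M^p \leq c\, b_\gamma\, r^p \mint_{B_r} V\,dx$. Coupling this with the pointwise bound $|w_{B_r}|^p \leq c(|w - w_{B_r}|^p + |w|^p)$ integrated against $V$ produces
\[ \frac{|w_{B_r}|^p M^p}{r^p} \leq c\,E^p + c \mint_{B_r} V|w - w_{B_r}|^p\,dx, \]
so everything collapses onto the cross-term bound $\mint_{B_r}V|w - w_{B_r}|^p\,dx \leq c B^p M^p$.

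This cross-term estimate is the technical heart of the argument, and is where the Fefferman--Phong idea enters. The plan is to apply H\"older with exponents $(\gamma,\gamma')$ to split it as $(\mint_{B_r}V^\gamma\,dx)^{1/\gamma}\,(\mint_{B_r}|w-w_{B_r}|^{p\gamma'}\,dx)^{1/\gamma'}$, the first factor being exactly $M^p/r^p$ by definition of $M$. The crucial matching is the observation that $p\gamma' = p\gamma/(\gamma-1) \leq p^* := np/(n-p)$ is \emph{equivalent} to $\gamma \geq n/p$, which is precisely the hypothesis when $p<n$; Jensen on $B_r$ followed by the Sobolev--Poincar\'e inequality then gives $(\mint_{B_r}|w - w_{B_r}|^{p\gamma'}\,dx)^{1/\gamma'} \leq (crB)^p$, and the factors $r^{\pm p}$ cancel to produce $cB^p M^p$. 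When $p\geq n$ the Sobolev--Poincar\'e inequality is available for every finite exponent (for instance by passing through some auxiliary $\tilde p < n$ with $\tilde p^*$ sufficiently large), so any $\gamma \in (1,n)$ is admissible; this is the main obstacle to unify, namely making the Sobolev matching work uniformly across the two regimes for $p$.

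Combining the pieces yields $(\mint_{B_r}|w/r|^p\,dx)^{1/p} M \leq c(BM+E)$, and the stated form of the lemma follows from the trivial inequality $BM+E \leq \max\{M,1\}(B+E)$ (since $M \leq \max\{M,1\}$ and $1 \leq \max\{M,1\}$), which produces the $\max\{M,1\}$ factor on the right-hand side.
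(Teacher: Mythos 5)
Your proof is correct but follows a genuinely different route than the paper's. You decompose $w = (w-w_{B_r}) + w_{B_r}$, handle the oscillation by Poincar\'e, and attack the mean-value term by combining the reverse H\"older inequality for $V$ with a H\"older split against exponents $(\gamma,\gamma')$, closing the cross-term $\mint_{B_r} V|w-w_{B_r}|^p\,dx$ via Jensen plus Sobolev--Poincar\'e; the crucial matching $p\gamma' \leq p^*$ is exactly the hypothesis $\gamma \geq n/p$, and the constant dependence $c(n,p,b_\gamma)$ checks out. The paper's proof is entirely different in character: it invokes Lemma~\ref{lemineq} to recast $\int_{B_r}|Dw|^p$ as a double integral of $|w(x)-w(y)|^p$, then uses the pointwise inequality $c_0|w(x)-w(y)|^p/r^p + V(y)|w(y)|^p \geq \min\{c_0/r^p,V(y)\}|w(x)|^p/2^{p-1}$ together with the $A_\infty$ level-set characterization of Lemma~\ref{lemequiv} (with $c_0 = \theta r^p\mint_{B_r}V$) to get a Fefferman--Phong-type lower bound; Sobolev embedding never enters. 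The trade-off is that the paper's argument works for \emph{any} $\gamma>1$ (only the $A_\infty$ property of $\mathcal{B}_\gamma$ is used, so the constraint $\gamma\geq n/p$ in the hypothesis is inherited from the rest of the paper rather than needed here), whereas your argument genuinely requires $\gamma \geq n/p$ for the Sobolev matching; on the other hand your proof is more self-contained, needing only classical Poincar\'e and Sobolev inequalities rather than the $A_\infty$ equivalence and the auxiliary double-integral Lemma~\ref{lemineq}. Your remark for $p\geq n$ (``passing through some auxiliary $\tilde p<n$'') is a little loose --- more simply, $W^{1,p}(B_r)$ for $p\geq n$ embeds into $L^q(B_r)$ for every finite $q$, so Sobolev--Poincar\'e is available for $q=p\gamma'$ directly --- but this is a presentation point, not a gap.
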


\begin{proof}
By Lemma \ref{lemineq}, we have
\begin{equation*}
\int_{B_r} \left|  Dw(x) \right|^p \,dx \geq \frac{c}{r^{n+p}}\int_{B_r} \int_{B_r} \left| w(x)-w(y) \right|^p \,dxdy.
\end{equation*}
Moreover, we also have
\begin{equation*}
\int_{B_r} V(x) \left|  w(x) \right|^p  \,dx = \frac{1}{r^{n}|B_1|}\int_{B_r} \int_{B_r} V(y) \left| w(y) \right|^p\,dxdy.
\end{equation*}
Then we have that for any constant $c_0>0$,
\begin{eqnarray}\label{pDwVwc0}
\nonumber&&\int_{B_r} \left|  Dw(x) \right|^p \,dx + \int_{B_r} V(x)\left|  w(x) \right|^p  \,dx \\
\nonumber&&\qquad \geq \frac{c}{\max\{c_0,1\}\,r^{n}} \bigg( \int_{B_r}\int_{B_r} \frac{c_0\left| w(x)-w(y) \right|^p}{r^p}\,dy dx \\
&& \hspace{4.5cm} + \int_{B_r} \int_{B_r} V(y)\left| w(y) \right|^p \,dydx\bigg).
\end{eqnarray}
Note that it is easily seen that
\begin{eqnarray*}
 &&\frac{c_0\left| w(x)-w(y) \right|^p}{r^p} + V(y)|w(y)|^p \\
 &&\quad \geq \min\left\{\frac{c_0}{r^p},V(y)\right\} \left(  \left| w(x)-w(y) \right|^p\ + |w(y)|^p \right) \geq \min\left\{\frac{c_0}{r^p},V(y)\right\} \frac{|w(x)|^p}{2^{p-1}} .
\end{eqnarray*}
Hence, inserting this into \eqref{pDwVwc0}, we obtain
\begin{eqnarray*}
&&\int_{B_r} \left|  Dw(x) \right|^p \,dx + \int_{B_r} \left|  w(x) \right|^p V(x) \,dx \\
&&\qquad \geq \frac{c}{\max\{ c_0,1\}\,r^n}  \int_{B_r} \left(\int_{B_r} \min_{y \in B_r}\left\{ \frac{c_0}{r^p} , V(y)\right\}  \,dy\right)\left| w(x)\right|^p\,dx.
\end{eqnarray*}

On the other hand, since $V \in \mathcal{B}_\gamma,$ by Lemma \ref{lemequiv} there exists $\theta>0$ such that
$$\left| \left\{ x \in B : V(x) \geq \theta \mint_{B} V(y)\,dy \right\} \right| \geq  \frac12  |B|$$
for every ball $B \subset \mr^n.$ Then we take $$c_0:= \theta\, r^p  \mint_{B_r} V(y) dy$$ to discover that
$$ \int_{B_r} \min_{y \in B_r}\left\{ \frac{c_0}{r^p} , V(y)\right\} dy   \geq \frac{c_0}{2r^p} |B_r| = \frac{c_0\, r^{n-p}|B_1|}{2}.  $$
Therefore we get
\begin{eqnarray*}
\frac{c_0r^{-p}}{\max\{ c_0,1\}}  \int_{B_r} |w|^p\,dx &\leq& \frac{c}{\max\{ c_0,1\}\,r^n}  \int_{B_r} \int_{B_r} \min_{y \in B_r}\left\{ \frac{c_0}{r^p} , V(y)\right\} \left| w(x)\right|^p \,dydx
\\
&\leq& c\left(\int_{B_r} \left|  Dw \right|^p \,dx + \int_{B_r}V \left|  w \right|^p  \,dx\right),
\end{eqnarray*}
which implies that
\begin{equation}\label{lem41pf1}
\frac{c_0}{\max\{ c_0,1\}} \mint_{B_r} \left|\frac{w}{r} \right|^{p} dx \leq c \left( \mint_{B_r} \left|  Dw \right|^p \,dx + \mint_{B_r}  V\left|  w \right|^p \,dx\right) .
\end{equation}

At this stage, if $c_0 < 1,$ we see that
\begin{eqnarray*}
\nonumber \left( \mint_{B_r} r^p\,V \, dx \right)^{\frac1p} \left( \mint_{B_r} \left| \frac{w}{r}\right|^{p} dx\right)^{\frac1p} &= & \left( c_0\mint_{B_r} \left| \frac{w}{r}\right|^{p} dx\right)^{\frac1p} \\
&& \hspace{-2cm}\leq\ c\left[ \left( \mint_{B_r} \left|  Dw \right|^p \,dx\right)^{\frac1p} + \left(\mint_{B_r} V \left|  w \right|^p  \,dx\right)^{\frac1p} \right].
\end{eqnarray*}
Using this and the fact $V\in \mathcal{B}_\gamma$, we have
\begin{eqnarray}\label{pc0>1}
\nonumber\left( \mint_{B_{r}}\left| \frac{w}{r}\right|^{p} \,dx\right)^{\frac1p} \left( \mint_{B_{r}} [r^pV]^{\gamma}\,dx\right)^{\frac{1}{p\gamma }} &&\\
\nonumber&&\hspace{-6.3cm} \leq b_{\gamma}^{\frac1p} \left( \mint_{B_{r}}\left| \frac{w}{r}\right|^{p} dx \right)^{\frac1p}  \left( \mint_{B_{r}}r^p V \, dx \right)^{\frac1p}\\
&&\hspace{-6.3cm} \leq c b_{\gamma}^{\frac1p}  \left[ \left( \mint_{B_{r}} \left|  Dw \right|^p \,dx\right)^{\frac1p} +   \left(\mint_{B_{r}} V \left|  w \right|^p  \,dx\right)^{\frac1p} \right].
 \end{eqnarray}
Otherwise, that is, if $c_0 \geq 1,$ we see from \eqref{lem41pf1} that
\begin{equation}\label{pc0<1}
\left( \mint_{B_r} \left| \frac{w}{r}\right|^{p} dx\right)^{\frac1p} \leq c \left[ \left( \mint_{B_r} \left|  Dw \right|^p \,dx\right)^{\frac1p} + \left(\mint_{B_r} V \left|  w \right|^p\,dx\right)^{\frac1p}\right].
\end{equation}

Then combining \eqref{pc0>1} and \eqref{pc0<1}, we finally obtain the desired estimate \eqref{prVbddpq}.
 \end{proof}

Now, let us consider a weak solution $w\in W^{1,p}(\Omega_{4r}(x_0))$ of
\begin{equation}
\label{homoeq1}
\left\{\begin{array}{rclcc}
-\mathrm{div}\, \mathbf{a}(x,Dw) + V |w|^{p-2}w&  =  &0 & \textrm{ in } & \Omega_{4r}(x_0),  \\
w & = & 0 & \textrm{ on } &  \partial_w\Omega_{4r}(x_0)\ \text{if}\ B_{4r}(x_0)\not\subset\Omega,\end{array}\right.
\end{equation}
and then we can obtain its gradient estimates as follows.

\begin{lemma}\label{lem42}
Let $1 < p < \infty$, and suppose that $\ba:\mr^n\times\mr^n\to\mr^n$ satisfies \eqref{aas1} and \eqref{aas2} and $V \in \mathcal{B}_{\gamma}$ for some $ \gamma \in [ \frac{n}{p}, n)$ when $p<n$ and for some $\gamma \in (1,n)$ when $p\geq n.$ There exists a small $\delta = \delta(n, p, \gamma, \Lambda, \nu) > 0 $ so that if $\mathbf{a}$ is $( \delta, R)$-vanishing  and $\Omega$ is $(\delta,R)$-Reifenberg flat for some $R\in(0,1)$, then for any $x_0\in\overline\Omega$, $r\in(0,\frac{R}{4}]$ satisfying $ (4r)^{p - \frac{n}{\gamma}} \Vert V \Vert_{L^{\gamma}(\Omega_{4r}(x_0))} \leq 1,$ and for any weak solution $w\in W^{1,p}(\Omega_{4r}(x_0))$ of \eqref{homoeq1} we have $|Dw|\in L^{\gamma^*(p-1)}(\Omega_r(x_0))$ with the estimate
\begin{eqnarray}\label{DwDwVwest}
\nonumber&&\left( \mint_{{\Omega}_r(x_0)} \left| D{w}\right|^{\gamma^* (p-1)}  \, dx \right)^{\frac{1}{\gamma^* (p-1)}}\\
&&\qquad  \leq c \left( \mint_{\Omega_{4r}(x_0)} \left|  Dw \right|^p \,dx\right)^{\frac1p} +   c \left(\mint_{\Omega_{4r}(x_0)} V\left|  w \right|^p  \,dx\right)^{\frac1p}.
\end{eqnarray}
Moreover, we have $V^{\frac1p}|w|\in L^{p\gamma }(\Omega_r(x_0))$ with the estimate
\begin{eqnarray}\label{DwDwVwest1}
\nonumber&&\left( \mint_{{\Omega}_r(x_0)} \left[ V^\frac{1}{p}|w|\right]^{p\gamma }  \, dx \right)^{\frac{1}{p\gamma }}\\
&&\qquad  \leq c\left( \mint_{\Omega_{4r}(x_0)} \left|  Dw \right|^p \,dx\right)^{\frac1p} + c  \left(\mint_{\Omega_{4r}(x_0)} V\left|  w \right|^p  \,dx\right)^{\frac1p}.
\end{eqnarray}
Here, the constants $c>0$ in the above estimates depend on $n,p,\gamma,\nu,L$ and $b_\gamma$.
\end{lemma}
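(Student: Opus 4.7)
The plan is to move the potential term to the right-hand side of the equation, so that $w$ satisfies a quasi-linear equation of pure $p$-Laplace type with forcing $f:=-V|w|^{p-2}w$, to which the Calder\'on--Zygmund theory of Theorem~\ref{thmDwbdd} applies. The target exponent $q=\gamma^*(p-1)$ is singled out because $q/(p-1)=\gamma^*$, whence $(q/(p-1))_{\ast}=\gamma$, reducing the hypothesis of Theorem~\ref{thmDwbdd} to an $L^\gamma$-type bound on $rf$. Three ingredients combine to supply that bound: (i) the $L^\infty$ estimate of Lemma~\ref{supvlem}, which converts the factor $|w|^{p-1}$ inside $f$ into a pointwise constant on $\Omega_{2r}(x_0)$; (ii) the $\mathcal{B}_\gamma$ reverse H\"older inequality, which turns $L^\gamma$ averages of $V$ into $L^1$ averages; and (iii) the standing smallness hypothesis $(4r)^{p-n/\gamma}\|V\|_{L^\gamma(\Omega_{4r}(x_0))}\leq 1$, which controls the natural scaling of $V$.

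Concretely, I would first invoke Lemma~\ref{supvlem} with outer radius $4r$ to obtain $\|w\|_{L^\infty(\Omega_{2r}(x_0))}\leq c(\mmint_{\Omega_{4r}(x_0)}|w|^p\,dx)^{1/p}$. Second, using $|f|\leq V\|w\|_{L^\infty}^{p-1}$ on $\Omega_{2r}(x_0)$ together with the $\mathcal{B}_\gamma$ property and the consequence $r^p(\mmint_{B_{4r}}V^\gamma)^{1/\gamma}\leq c$ of the standing smallness hypothesis, a direct H\"older computation yields
\[
\left(\mmint_{\Omega_{2r}(x_0)}|rf|^\gamma\,dx\right)^{\frac{1}{\gamma(p-1)}}\leq \frac{c\,\|w\|_{L^\infty}}{r}\,K^{\frac{p}{p-1}},
\]
where $K:=(\mmint_{B_{4r}(x_0)}[(4r)^pV]^\gamma)^{1/(p\gamma)}$ is bounded above by a universal constant. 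Third, apply Lemma~\ref{lemrVbddpq} on $B_{4r}(x_0)$, after extending $w$ by zero outside $\Omega$ in the boundary case (legitimate since $w=0$ on $\partial_w\Omega_{4r}(x_0)$), to control $\|w\|_{L^\infty}/r$ against the quantity $A+B:=(\mmint|Dw|^p)^{1/p}+(\mmint V|w|^p)^{1/p}$ over $\Omega_{4r}(x_0)$. A short case analysis splitting on whether $K\leq 1$ or $K>1$ then shows that the factor $K^{p/(p-1)}$ above exactly matches the multiplicative $K$ on the left of Lemma~\ref{lemrVbddpq}, producing the clean bound $(\mmint|rf|^\gamma)^{1/(\gamma(p-1))}\leq c[A+B]$. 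Feeding this into Theorem~\ref{thmDwbdd} and chaining the balls $\Omega_r\subset\Omega_{2r}\subset\Omega_{4r}$ finishes \eqref{DwDwVwest}.

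For \eqref{DwDwVwest1} the argument is parallel and shorter: by the $L^\infty$ bound and the $\mathcal{B}_\gamma$ inequality,
\[
\left(\mmint_{\Omega_r(x_0)}V^\gamma|w|^{p\gamma}\,dx\right)^{\frac{1}{p\gamma}}\leq \|w\|_{L^\infty(\Omega_{2r}(x_0))}\left(\mmint V^\gamma\right)^{\frac{1}{p\gamma}}\leq c\,\|w\|_{L^\infty}\left(\mmint V\right)^{\frac{1}{p}},
\]
and the right-hand side is reduced to $c[A+B]$ by the same Lemma~\ref{supvlem}--Lemma~\ref{lemrVbddpq} combination already used above. The principal technical obstacle will be the delicate coupling in the $K$ case analysis for the first estimate: the $1/K$ tail appearing upon dividing through Lemma~\ref{lemrVbddpq} must be absorbed by the $K^{p/(p-1)}$ gained from the forcing estimate, and universality of constants hinges on this balance. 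Secondary points are verifying that $q=\gamma^*(p-1)>\max\{p,(p-1)n/(n-1)\}$ so that Theorem~\ref{thmDwbdd} applies throughout the stated range of $\gamma$ (switching to Theorem~\ref{thmDwbdd1} if $p\geq n$ forces $\gamma^*(p-1)\leq (p-1)n/(n-1)$), and tracking dependence on $b_\gamma$ through the repeated applications of the reverse H\"older inequality.
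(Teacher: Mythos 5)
Your proposal is correct and follows the same route as the paper: invoke Lemma~\ref{supvlem} for the $L^\infty$ bound, treat $f=-V|w|^{p-2}w$ as forcing data and apply Theorem~\ref{thmDwbdd} with $q=\gamma^*(p-1)$ (so that $(q/(p-1))_*=\gamma$), reduce the $L^\gamma$ bound on $rf$ to the $\mathcal{B}_\gamma$ and smallness hypotheses, and close with the Fefferman--Phong type Lemma~\ref{lemrVbddpq}. Two remarks. First, your attention to the coupling between the $K^{p/(p-1)}$ factor coming out of the $rf$-bound and the $\max\{K,1\}$ in Lemma~\ref{lemrVbddpq} is actually \emph{more} careful than the paper's own exposition: the printed chain in \eqref{rVwDwVwes} asserts $\bigl(\mint_{\Omega_{4r}}|w/r|^p\bigr)^{1/p}\leq c[A+B]$ as a free-standing inequality, which cannot be true uniformly in the \emph{interior} case when $K$ is small (e.g.\ $V\equiv 0$, $B_{4r}\subset\Omega$, $w$ a nonzero constant makes $A+B=0$ while the left side is positive); retaining the $K$-factor as you do, i.e.\ using $K^{p/(p-1)}\le K$ for $K\le 1$ together with the conclusion of Lemma~\ref{lemrVbddpq}, and the smallness hypothesis to bound $K$ above for $K>1$, is the clean way to justify the step. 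Second, your hedge about possibly needing Theorem~\ref{thmDwbdd1} is unnecessary for this lemma: for every admissible $\gamma>1$ one has $\gamma^*(p-1)>n(p-1)/(n-1)$ (equivalent to $\gamma>1$) and $\gamma^*(p-1)>p$ (for $p\le n$ this uses $\gamma\ge n/p$; for $p>n$ it already follows from $\gamma^*(p-1)>n(p-1)/(n-1)>p$), so Theorem~\ref{thmDwbdd} suffices at the exponent $q=\gamma^*(p-1)$; Theorem~\ref{thmDwbdd1} only becomes relevant later in Step~5 of the main theorem where smaller exponents occur.
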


\begin{proof}
For simplicity we shall denote $\Omega_{\rho} :=\Omega_{\rho}(x_0)$ and $B_{\rho} := B_{\rho}(x_0)$ for any $\rho>0$ in this proof. We first observe that, in view of Lemma \ref{supvlem} with $r$ replaced by $2r$, \begin{equation}\label{lem42pf1}
\|w\|_{L^\infty(\Omega_{2r})} \leq c \left( \mint_{\Omega_{4r}} |w|^p \, dx \right)^{\frac{1}{p}}.
\end{equation}
Then from the fact $V\in L^\gamma(\Omega),$ we see that $V|w|^{p-2}w\in L^\gamma(\Omega_{2r})$. Therefore, applying Theorem \ref{thmDwbdd} with $q=\gamma^*(p-1)$, $f=V|w|^{p-2}w$ and $F=0$, we have
\begin{eqnarray}\label{DwDwrVwes}
 \nonumber&& \left(\mint_{\Omega_r} |Dw|^{\gamma^*(p-1)} \, dx\right)^{\frac{1}{\gamma^*(p-1)}} \\  
 &&\qquad \leq c \left( \mint_{\Omega_{2r}} |Dw|^{p}\, dx\right)^{\frac{1}{p}}+ c \left(   \mint_{\Omega_{2r}} \left[r V|w|^{p-1}  \right]^\gamma \, dx\right)^{\frac{1}{\gamma(p-1)}}.
\end{eqnarray}
We now estimate the last term on the right hand side in the previous inequality. Using \eqref{lem42pf1} and \eqref{prVbddpq} with the assumption $ (4r)^{p - \frac{n}{\gamma}} \Vert V \Vert_{L^{\gamma}(\Omega_{4r})} \leq 1,$ we have
\begin{eqnarray}\label{rVwDwVwes}
\nonumber  \left( \mint_{\Omega_{2r}} \left[ r V |w|^{p-1}\right]^{\gamma} dx\right)^{\frac{1}{\gamma(p-1)}}
\nonumber&  \leq& \frac{ \|w\|_{L^\infty(\Omega_{2r})}}{r}  \leq c \left( \mint_{\Omega_{4r}}\left| \frac{w}{r}\right|^{p} \,dx\right)^{\frac1p}  \\
 &  \leq&  c \left[ \left( \mint_{\Omega_{4r}} \left|  Dw \right|^p \,dx\right)^{\frac1p} +   \left(\mint_{\Omega_{4r}} V\left|  w \right|^p  \,dx\right)^{\frac1p} \right].
  \end{eqnarray}
Here, we let $w\equiv 0$ in $B_{4r}\setminus \Omega$ and have used \eqref{dencon}.  Hence, inserting \eqref{rVwDwVwes} into \eqref{DwDwrVwes}, we obtain \eqref{DwDwVwest}. In the same way as \eqref{rVwDwVwes}, we can derive \eqref{DwDwVwest1}.
\end{proof}

\section{Comparison estimates}
\label{secComestimates}

In this section, we shall derive comparison estimates between the weak solution to \eqref{maineq} and weak solutions to localized equations  of \eqref{maineq} with $F\equiv0.$

\begin{lemma}\label{comestlem}
Let $1 < p < \infty$, and  suppose that $\mathbf{a}:\mr^n\times\mr^n\to\mr^n$ satisfies \eqref{aas1}-\eqref{aas2}.
 For any $\epsilon \in (0,1),$ there exists a small $ \delta = \delta( \epsilon, n, p, L, \nu) \in(0,1) $ such that if $u \in W^{1,p}(\Omega)$ is the weak solution to \eqref{maineq} with
\begin{equation}\label{lDuurbdd}
\left( \mint_{\Omega_{4r}} \left[|Du| + V^\frac{1}{p}|u|\right]^p \,dx\right)^{\frac{1}{p}} <\lambda
\end{equation}
and
\begin{equation}\label{lFbdd}
\left( \mint_{\Omega_{4r}} |F|^p \,dx\right)^{\frac{1}{p}}<\delta\, \lambda
\end{equation}
for some $r>0$ and $\lambda>0$, then  we have
\begin{equation}\label{lDumDvibdd}
\mint_{\Omega_{4r}}  |Du-Dw|^{p} + V |u-w|^{p} \, dx \leq \epsilon \lambda^p,
\end{equation}
where  $w \in W^{1,p}(\Omega_{4r})$ is the unique weak solution to
\begin{equation}
\label{lhomoeq}
\left\{\begin{array}{rclcc}
-\mathrm{div}\, \mathbf{a}(x,Dw) + V |w|^{p-2}w&  =  & 0 & \textrm{ in } & \Omega_{4r},  \\
w & = & u & \textrm{ on } & \partial \Omega_{4r}.
\end{array}\right.
\end{equation}
\end{lemma}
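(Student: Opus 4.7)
\smallskip

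\textbf{Proof proposal.} The plan is to test both equations with the difference $u-w$ and compare. Since $w=u$ on $\partial\Omega_{4r}$ (and both vanish on $\partial\Omega\cap B_{4r}$), the function $u-w$ lies in $W^{1,p}_0(\Omega_{4r})$ and, extended by zero, in $W^{1,p}_0(\Omega)$, so it is admissible in the weak formulation \eqref{weakform} of \eqref{maineq} and in the weak formulation of \eqref{lhomoeq}. Subtracting produces the identity
\begin{equation*}
\int_{\Omega_{4r}}\!\bigl(\mathbf a(x,Du)-\mathbf a(x,Dw)\bigr)\!\cdot\! D(u-w)\,dx+\int_{\Omega_{4r}}\!V\bigl(|u|^{p-2}u-|w|^{p-2}w\bigr)(u-w)\,dx=\int_{\Omega_{4r}}\!|F|^{p-2}F\!\cdot\! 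D(u-w)\,dx.
\end{equation*}
As a preliminary step I would also test \eqref{lhomoeq} with $w-u$ and combine growth, ellipticity and Young's inequality to obtain the a priori energy bound
$$
\int_{\Omega_{4r}}|Dw|^{p}+V|w|^{p}\,dx\le C\!\int_{\Omega_{4r}}|Du|^{p}+V|u|^{p}\,dx\le C\lambda^{p}|\Omega_{4r}|,
$$
which will be needed in the subquadratic case.

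\smallskip

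\textit{The case $p\ge 2$.} Here I would apply \eqref{mono1} on the gradient side and the analogous scalar inequality $(|s|^{p-2}s-|t|^{p-2}t)(s-t)\ge c_{p}|s-t|^{p}$ on the potential side. The identity then becomes
$$
c\!\int_{\Omega_{4r}}|Du-Dw|^{p}\,dx+c\!\int_{\Omega_{4r}}V|u-w|^{p}\,dx\le \int_{\Omega_{4r}}|F|^{p-1}|Du-Dw|\,dx.
$$
Splitting the right-hand side by Young's inequality and absorbing the gradient term, averaging over $\Omega_{4r}$, and using \eqref{lFbdd} yields a bound by $C\delta^{p}\lambda^{p}$, which is $\le\varepsilon\lambda^{p}$ provided $\delta$ is chosen small in terms of $\varepsilon,n,p,L,\nu$.

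\smallskip

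\textit{The case $1<p<2$.} This is where the main technical obstacle lies: \eqref{mono} only produces the weighted $L^{2}$ quantity $(|Du|^{2}+|Dw|^{2})^{(p-2)/2}|Du-Dw|^{2}$, and similarly $(|u|+|w|)^{p-2}|u-w|^{2}$ for the potential term. To pass back to the $L^{p}$-norms that appear in \eqref{lDumDvibdd}, I would write
$$
|Du-Dw|^{p}=\bigl[(|Du|^{2}+|Dw|^{2})^{(p-2)/2}|Du-Dw|^{2}\bigr]^{p/2}\bigl(|Du|^{2}+|Dw|^{2}\bigr)^{p(2-p)/4},
$$
apply Hölder with exponents $2/p$ and $2/(2-p)$, and then insert the a priori bound $\|Dw\|_{L^{p}(\Omega_{4r})}\le C\lambda|\Omega_{4r}|^{1/p}$ together with \eqref{lDuurbdd}. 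The same manipulation, using $(|u|+|w|)^{p-2}|u-w|^{2}$ weighted by $V$ in place of the gradient expression, handles the potential contribution after factoring $V=V^{(p/2)\cdot (2/p)}$ suitably. The right-hand side is controlled by Hölder and \eqref{lFbdd} as before.

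\smallskip

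Collecting both cases and averaging gives \eqref{lDumDvibdd} after choosing $\delta=\delta(\varepsilon,n,p,L,\nu)$ small enough. The principal difficulty is the subquadratic case, where the Hölder manipulation must be carried out carefully and the a priori energy bound is essential to close the estimate.
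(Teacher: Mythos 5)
Your proposal follows essentially the same route as the paper: test both equations against $u-w$, obtain the a priori energy bound for $w$ by testing the homogeneous problem against $w-u$, invoke the monotonicity structure \eqref{mono}/\eqref{mono1}, split into $p\ge 2$ and $1<p<2$, and in the subquadratic case interpolate from the weighted quantity $(|Du|^2+|Dw|^2)^{(p-2)/2}|Du-Dw|^2$ (and its potential analogue) back to $L^p$ using the energy bound, finally absorbing via Young and \eqref{lFbdd}. The only cosmetic difference is that you carry out this interpolation with integral H\"older after integrating, whereas the paper applies the same exponent arithmetic pointwise via Young's inequality before integrating; the two close identically with the same choice of small $\delta(\epsilon,n,p,L,\nu)$.
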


\begin{proof}
We first test the equations \eqref{lhomoeq} with the testing function $\varphi=w-u$ in order to discover
$$
\int_{\Omega_{4r}} \mathbf{a}(x, Dw)\cdot ( Dw-Du)\, dx + \int_{\Omega_{4r}} V |w|^{p-2}w  \cdot (w-u)\, dx= 0,
$$
and then, in view of \eqref{aas1} and \eqref{mono}, we obtain
$$
\int_{\Omega_{4r}} |Dw|^p\, dx + \int_{\Omega_{4r}} V |w|^p\, dx\leq c\int_{\Omega_{4r}} |Dw|^{p-1}|Du|\, dx + c \int_{\Omega_{4r}} V |w|^{p-1}|u|\, dx.
$$
Therefore, applying Young's inequality and \eqref{lDuurbdd} we have
\begin{equation}\label{lem51pf1}
\left(\int_{\Omega_{4r}} |Dw|^p\, dx \right)^{\frac1p}+\left( \int_{\Omega_{4r}} V |w|^p\, dx\right)^{\frac1p}\leq c\lambda.
\end{equation}

We next test the equations \eqref{maineq} and \eqref{lhomoeq} with the testing function $\varphi=u-w$ in order to discover
\begin{eqnarray*}
\nonumber &&\int_{\Omega_{4r}} \left( \mathbf{a}(x, Du) - \mathbf{a}(x, Dw) \right) \cdot ( Du-Dw)\, dx \\
&& \quad+ \int_{\Omega_{4r}} V \left( |u|^{p-2}u - |w|^{p-2}w \right) \cdot (u-w)\, dx= \int_{\Omega_{4r}} |F|^{p-2}F \cdot (Du-Dw)\, dx.
\end{eqnarray*}
By virtue of the monotonicity condition \eqref{mono}, we derive that
\begin{eqnarray*}
&&\mint_{\Omega_{4r}}\left( |Du|^{2}+ |Dw|^2 \right)^{\frac{p-2}{2}}  | Du-Dw|^2\, dx \\
&&\qquad +\mint_{\Omega_{4r}}V \left( |u|^{2}+ |w|^2 \right)^{\frac{p-2}{2}}  | u-w|^2\, dx \leq c \mint_{\Omega_{4r}} |F|^{p-1}|Du-Dw|\, dx.
\end{eqnarray*}
Note that if $p\geq 2$, by \eqref{mono1}  we have
$$
\mint_{\Omega_{4r}}|Du-Dw|^p\, dx +\mint_{\Omega_{4r}}V |u-w|^p\, dx \leq c \mint_{\Omega_{4r}} |F|^{p-1}|Du-Dw|\, dx.
$$
On the other hand, if $1<p<2$, then by Young's inequality we have
\begin{eqnarray*}
 |Du-Dw|^p &=&   |Du-Dw|^p \left( |Du|^2 + |Dw|^2 \right)^{\frac{p(p-2)}{4}} \left( |Du|^2 + |Dw|^2 \right)^{\frac{p(2-p)}{4}}\\
 &\leq& \kappa \left( |Du|^2 + |Dw|^2 \right)^{\frac{p}{2}} +c(\kappa) \left( |Du|^2 + |Dw|^2 \right)^{\frac{p-2}{2}}  |Du-Dw|^2
 \end{eqnarray*}
and
$$
 V |u-w|^p \leq \kappa V \left( |u|^2 + |w|^2 \right)^{\frac{p}{2}} +c(\kappa) V \left( |u|^2 + |w|^2 \right)^{\frac{p-2}{2}}  |u-w|^2
$$
for any small $\kappa>0.$  Therefore, combining the above results with \eqref{lDuurbdd}, we have that
\begin{eqnarray*}
\nonumber&&\mint_{\Omega_{4r}} |Du-Dw|^{p}  \, dx+\mint_{\Omega_{4r}} V|u-w|^{p}  \, dx \\
\nonumber&&\quad \leq  \kappa \mint_{\Omega_{4r}} (|Du|^2+|Dw|^2)^\frac{p}{2}   \, dx  +\kappa \mint_{\Omega_{4r}} V (|u|^2+|w|^2)^\frac{p}{2}   \, dx\\\
\nonumber&&\quad \quad+ c(\kappa) \mint_{\Omega_{4r}} \left( |Du|^2 + |Dw|^2 \right)^{\frac{p-2}{2}} |Du-Dw|^{2}    \, dx\\
\nonumber&&\quad \quad+  c(\kappa)  \mint_{\Omega_{4r}} V\left( |u|^2 + |w|^2 \right)^{\frac{p-2}{2}} |u-w|^{2}    \, dx \\
\nonumber&&\quad \leq c_1 \kappa \, \lambda^p+ c(\kappa) \mint_{\Omega_{4 r}} |F|^{p-1}|Du-Dw|\, dx
\end{eqnarray*}
for some $c_1=c_1(n,p,L,\nu)>0$ and $c(\kappa)= c(\kappa,n,p,L,\nu)\geq1$.

Therefore, in any case, we obtain
\begin{eqnarray*}
\nonumber&&\mint_{\Omega_{4r}} |Du-Dw|^{p}  \, dx+\mint_{\Omega_{4r}} V \left|u-w\right|^{p}  \, dx\\
&& \qquad\leq c_1\kappa \, \lambda^p+ c(\kappa,\tau)\mint_{\Omega_{4r}} |F|^{p}\, dx + c(\kappa)\,\tau \mint_{\Omega_{4r}} |Du-Dw|^p\, dx \\
& &  \qquad  \leq  c_1\kappa \, \lambda^p+ c(\kappa,\tau)\delta^p \,\lambda^p + c(\kappa)\,\tau \mint_{\Omega_{4r}}|Du-Dw|^p\, dx
\end{eqnarray*}
for any small $\kappa,\tau>0$ and for some $c(\kappa,\tau)=c(\kappa,\tau,n,p,L,\nu)\geq 1$. Here, we have used Young's inequality and \eqref{lFbdd}. Taking  $\kappa$, $\tau$ and $\delta$ sufficiently small such that
$$
\kappa=\frac{\epsilon}{4c_1},\ \ \ \tau = \frac{1}{2c(\kappa)}\ \ \ \text{and}\ \ \ \delta =\left(\frac{\epsilon}{4c(\kappa,\tau)}\right)^{\frac1p},
$$
we finally obtain \eqref{lDumDvibdd}.
\end{proof}

 We notice that $\gamma^*(p-1)>\max\{p,\frac{n(p-1)}{n-1}\}$. Therefore, applying the results in Lemma \ref{lem42} to the weak solution $w$ of \eqref{lhomoeq} in the previous lemma, we obtain the following gradient estimates.

\begin{lemma}\label{comestlem1}
Let $1 < p < \infty$, and  suppose that $\mathbf{a}:\mr^n\times\mr^n\to\mr^n$ satisfies \eqref{aas1}-\eqref{aas2} and $V \in \mathcal{B}_\gamma$ for some $\gamma \in [ \frac{n}{p}, n)$ when $p<n$ and for some $\gamma \in (1,n)$ when $p\geq n.$
There exists a small $ \delta = \delta(n, p, \gamma, L, \nu) > 0 $ such that if $\mathbf{a}$ is $( \delta, R)$-vanishing, $\Omega$ is $( \delta, R)$-Reifenberg flat and  $u \in W^{1,p}(\Omega)$ is the weak solution to \eqref{maineq} with \eqref{lDuurbdd} and \eqref{lFbdd} for some $r\in(0,\frac R4]$ satisfying $ (4r)^{p - \frac{n}{\gamma}} \Vert V \Vert_{L^{\gamma}(\Omega_{4r})} \leq 1$ and $\lambda>0$, then  we have
$$
\left( \mint_{\Omega_{r}} \left| Dw\right|^{\gamma^* (p-1)}  \, dx \right)^{\frac{1}{\gamma^* (p-1)}} \leq c\, \lambda
$$
and
$$
 \left( \mint_{\Omega_{r}} \left[V^{\frac1p} |w|\right]^{p\gamma }  \, dx \right)^{\frac{1}{p\gamma }} \leq c\, \lambda
$$
for some $c=c(n,p, \gamma, L, \nu, b_\gamma)>0,$ where $w$ is the unique weak solution to \eqref{lhomoeq}.
\end{lemma}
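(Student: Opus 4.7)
The plan is to apply Lemma~\ref{lem42} directly to the comparison function $w$. Two ingredients are needed: an energy bound of the form
$$\left(\mint_{\Omega_{4r}}|Dw|^p\,dx\right)^{1/p} + \left(\mint_{\Omega_{4r}} V|w|^p\,dx\right)^{1/p} \leq c\,\lambda,$$
and a verification that $w$ fits the template of the homogeneous localized problem \eqref{homoeq1} so that Lemma~\ref{lem42} is legitimately applicable.

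For the energy bound I would test \eqref{lhomoeq} with $\varphi = w - u \in W_0^{1,p}(\Omega_{4r})$; this is admissible since $w = u$ on $\partial\Omega_{4r}$. Using the growth bound \eqref{aas1} (to control $|\mathbf{a}(x,Dw)|$), the ellipticity \eqref{aas2} (which yields $\mathbf{a}(x,Dw)\cdot Dw \geq c|Dw|^p$), the identity $V|w|^{p-2}w\cdot w = V|w|^p$, and Young-type absorption, one obtains
$$\int_{\Omega_{4r}} |Dw|^p + V|w|^p\,dx \leq c\int_{\Omega_{4r}} |Du|^p + V|u|^p\,dx.$$
Hypothesis \eqref{lDuurbdd} bounds the right-hand side by $c\lambda^p|\Omega_{4r}|$, and dividing by $|\Omega_{4r}|$ gives the required averaged estimate. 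This is essentially the computation leading to \eqref{lem51pf1} in the proof of Lemma~\ref{comestlem}, so I can simply invoke it.

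For the second ingredient, since $u$ is the weak solution of \eqref{maineq} it vanishes on $\partial\Omega$; consequently $w = u = 0$ on $\partial_w\Omega_{4r}(x_0) = B_{4r}(x_0)\cap\partial\Omega$, and $w$ is a bona fide weak solution of \eqref{homoeq1} on $\Omega_{4r}$. Choosing $\delta$ no larger than the threshold demanded by Lemma~\ref{lem42} (which depends only on $n, p, \gamma, L, \nu$) and using the standing hypothesis $(4r)^{p-n/\gamma}\|V\|_{L^\gamma(\Omega_{4r})} \leq 1$, Lemma~\ref{lem42} applied to $w$ together with the energy bound above yields the two estimates claimed. There is no substantive obstacle here: the statement is essentially bookkeeping that stitches the Caccioppoli-type energy comparison to the higher-integrability output of Lemma~\ref{lem42}.
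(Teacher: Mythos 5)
Your proposal is correct and takes essentially the same approach as the paper: the paper's entire proof of this lemma is a one-line remark that the estimates follow directly from \eqref{DwDwVwest}, \eqref{DwDwVwest1}, and \eqref{lem51pf1}, which is precisely the combination of the energy-comparison bound and Lemma~\ref{lem42} you describe.
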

\begin{proof}
The estimates above directly follow from  \eqref{DwDwVwest}, \eqref{DwDwVwest1} and \eqref{lem51pf1}.
\end{proof}

\section{$L^q$-estimates}
\label{sec gradient estimates}
Now we are ready to prove  our main results,  Theorem~\ref{mainthm} and Corollary~\ref{maincor}. As we mentioned in the introduction, we employ so-called an exit-time argument introduced by Mingione in \cite{AM1,Min1}.  
\subsection{Proof of Theorem \ref{mainthm}}\ \\
Assume that $\ba:\mr^n\times \mr^n\to\mr^n$ is $(\delta,R)$-vanishing and $\Omega$ is $(\delta,R)$-Reifenberg flat for some $R\in(0,1),$ where   $\delta\in(0,1)$  will be chosen sufficiently small later.
 Now, we prove the estimate \eqref{mainlocalest}. Fix any $x_0\in\overline{\Omega}$ and $r>0$ satisfying $r \leq \frac{R}{4}$  and $(4r)^{p - \frac{n}{\gamma}} \Vert V \Vert_{L^{\gamma}(\Omega_{4r}(x_0))} \leq 1.$ Note that 
 $$
 \rho^{p - \frac{n}{\gamma}} \Vert V \Vert_{L^{\gamma}(\Omega_{\rho}(y))}\leq 1
 $$ 
 for any $B_\rho(y)\subset B_{4r}(x_0)$ with $y\in B_{4r}(x_0)\cap \overline{\Omega}.$
 For the sake of simplicity, we shall write $\Omega_{\rho}:=\Omega_{\rho}(x_0)$, $\rho>0$. Also, we define
\begin{equation}\label{Phi}
\Phi(u,V):= |Du|+V^\frac{1}{p}|u|,
\end{equation}
and for $\lambda,\rho>0$
$$
E(\lambda, \rho) := \{x \in \Omega_{\rho} :    \Phi(u,V)(x)>\lambda  \}.
$$

The proof goes in five steps.

\vspace{0.2cm}\textit{Step 1. Covering argument.}\ \\
Fix any $s_1, s_2$ with $1\leq s_1 < s_2 \leq 2.$ Then we have $ \Omega_{r}\subset \Omega_{s_1 r } \subset  \Omega_{s_2 r } \subset  \Omega_{2 r } $. We define
 \begin{equation}\label{lambda0}
\lambda_0:= \left( \mint_{\Omega_{2r}} \left[\Phi(u,V)\right]^p \,dx\right)^{\frac1p}+\frac{1}{\delta} \left(
 \mint_{\Omega_{2r}} |F|^p \,dx \right)^{\frac1p},
\end{equation}
and consider $\lambda>0$ large enough so that
\begin{equation}\label{lambdarg}
\lambda > A\, \lambda_0,\ \ \textrm{where } A : = \left( \frac{16}{7}\right)^{\frac{n}{p}}\left( \frac{40}{s_2 - s_1}\right)^{\frac{n}{p}}.
\end{equation}
We note that
$ \Omega_{\rho}(y) \subset \Omega_{2r}$ for any $y \in E(\lambda, s_1 r)$ and any $ \rho \in \left( 0, (s_2-s_1)\,r \right].$
By virtue of the measure density condition \eqref{dencon} and  the definition of $\lambda_0$ in \eqref{lambda0}, we then deduce that
\begin{eqnarray*}
&&\left(\mint_{\Omega_{\rho}(y)} \left[\Phi(u,V)\right]^p \,dx\right)^\frac1p+
\frac{1}{\delta}\left( \mint_{\Omega_{\rho}(y)} |F|^p \,dx\right)^\frac1p\\
&& \leq \left(\frac{|\Omega_{2r}|}{|\Omega_{\rho}(y)|}\right)^{\frac{1}{p}} \lambda_0  \leq \left( \frac{16}{7}\right)^{\frac np} \left( \frac{2r}{\rho}\right)^{\frac np}\, \lambda_0 \leq A\, \lambda_0  < \lambda,
\end{eqnarray*}
provided that
$$ \frac{(s_2 - s_1)\,r}{20} \leq \rho \leq (s_2-s_1)\,r.$$

On the other hand, Lebesgue's differentiation theorem yields that for almost every $y \in E(\lambda, s_1 r),$
$$ \lim_{\rho \rightarrow 0} \left[\left(\mint_{\Omega_{\rho}(y)} \left[\Phi(u,V)\right]^p \,dx \right)^{\frac1p}+\frac{1}{\delta} \left(
 \mint_{\Omega_{\rho}(y)} |F|^p \,dx\right)^{\frac1p}\right] > \lambda.$$
Therefore the continuity of the integral implies that for almost every $y \in E(\lambda, s_1 r),$ there exists
$$ \rho_{y} = \rho(y) \in \left( 0, \frac{(s_2-s_1)\,r}{20}\right)$$
such that
$$ \left(\mint_{\Omega_{\rho_y}(y)} \left[\Phi(u,V)\right]^p \,dx \right)^{\frac1p}+\frac{1}{\delta} \left(
 \mint_{\Omega_{\rho_y}(y)} |F|^p \,dx\right)^{\frac1p} = \lambda$$
and, for any $ \rho \in ( \rho_y, (s_2-s_1)r],$
$$ \left(\mint_{\Omega_{\rho}(y)} \left[\Phi(u,V)\right]^p \,dx \right)^{\frac1p}+\frac{1}{\delta} \left(
 \mint_{\Omega_{\rho}(y)} |F|^p \,dx\right)^{\frac1p} <\lambda.
 $$

Applying Vitali's covering theorem, we have the following:
\begin{lemma}\label{coveringlem}
Given $\lambda > A\, \lambda_0,$ there exists a disjoint family of $\{ \Omega_{\rho_i}(y^i)\}_{i=1}^{\infty}$ with $y^i \in E(\lambda, s_1 r)$ and $\rho_{i} \in \left(0, \frac{(s_2-s_1)\,r}{20} \right)$ such that
$$E(\lambda, s_1 r) \subset \bigcup_{i=1}^{\infty} \Omega_{5\rho_i}(y^i), $$
\begin{equation}\label{covering1}
 \left(\mint_{\Omega_{\rho_i}(y^i)} \left[\Phi(u,V)\right]^p \,dx\right)^{\frac1p} +
\frac{1}{\delta}\left( \mint_{\Omega_{\rho_i}(y^i)} |F|^p \,dx\right)^\frac1p = \lambda,
\end{equation}
and  for any $ \rho \in ( \rho_i, (s_2-s_1)\,r]$,
\begin{equation}\label{covering2}
\left(\mint_{\Omega_{\rho_i}(y^i)} \left[\Phi(u,V)\right]^p \,dx\right)^{\frac1p} + \frac{1}{\delta}\left( \mint_{\Omega_{\rho_i}(y^i)} |F|^p \,dx\right)^\frac1p<\lambda.
\end{equation}
\end{lemma}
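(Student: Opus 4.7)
The analytic backbone of this lemma is essentially already carried out in the paragraphs that immediately precede its statement; what remains is to extract a disjoint subfamily via a standard Vitali argument. My plan is therefore to assemble the observations already made and then invoke Vitali's covering theorem.

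First I would codify what the preceding discussion yields. Writing $G(y,\rho):=\bigl(\mint_{\Omega_\rho(y)}[\Phi(u,V)]^p\,dx\bigr)^{1/p}+\frac1\delta\bigl(\mint_{\Omega_\rho(y)}|F|^p\,dx\bigr)^{1/p}$, the text establishes that for almost every $y\in E(\lambda,s_1 r)$ there exists $\rho_y\in(0,(s_2-s_1)r/20)$ with $G(y,\rho_y)=\lambda$ and $G(y,\rho)<\lambda$ for all $\rho\in(\rho_y,(s_2-s_1)r]$. This rests on three inputs that are already in hand: (i) Lebesgue's differentiation theorem at points of $E(\lambda,s_1r)$, which forces $\lim_{\rho\to 0}G(y,\rho)>\lambda$; (ii) the measure density condition \eqref{dencon} together with the choice of $A$ in \eqref{lambdarg}, which delivers $G(y,\rho)\le A\lambda_0<\lambda$ on the range $\rho\in[(s_2-s_1)r/20,(s_2-s_1)r]$; and (iii) continuity of $\rho\mapsto G(y,\rho)$, so that the intermediate value theorem supplies $\rho_y$ as the largest radius in $(0,(s_2-s_1)r/20)$ at which equality holds, while strict inequality automatically persists on $(\rho_y,(s_2-s_1)r]$.

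Next I would apply Vitali's covering lemma to the family of Euclidean balls $\{B_{\rho_y}(y)\}_{y\in E(\lambda,s_1r)}$. The radii are uniformly bounded by $(s_2-s_1)r/20$, so Vitali produces a countable disjoint subfamily $\{B_{\rho_i}(y^i)\}_{i=1}^\infty$ with $y^i\in E(\lambda,s_1r)$ and $\rho_i:=\rho_{y^i}$, such that $\bigcup_{y}B_{\rho_y}(y)\subset\bigcup_i B_{5\rho_i}(y^i)$. Disjointness of the ambient balls immediately transfers to disjointness of the relative sets $\Omega_{\rho_i}(y^i)=B_{\rho_i}(y^i)\cap\Omega$, and intersecting the cover with $\Omega$ gives $E(\lambda,s_1r)\subset\bigcup_i\Omega_{5\rho_i}(y^i)$ modulo a Lebesgue null set, which is harmless since the exit radii $\rho_y$ are defined almost everywhere on $E(\lambda,s_1r)$. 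The identity \eqref{covering1} and the inequality \eqref{covering2} are then exactly the defining properties of $\rho_i=\rho_{y^i}$ from step one.

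There is no genuine obstacle: the statement is a bookkeeping consequence of an exit-time selection plus Vitali. The only points worth double-checking during write-up are that $B_{\rho_y}(y)$ (rather than the truncated $\Omega_{\rho_y}(y)$) is the correct object to feed into Vitali so that Euclidean disjointness is available, and that the large-scale upper bound $G(y,\rho)\le A\lambda_0$ really does hold uniformly across $y\in E(\lambda,s_1r)$ and $\rho\in[(s_2-s_1)r/20,(s_2-s_1)r]$, both of which are essentially one-line verifications from \eqref{dencon} and \eqref{lambda0}.
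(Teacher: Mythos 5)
Your proposal is correct and matches the paper's approach: the paper does not give a separate proof of this lemma but derives it from exactly the three ingredients you list (measure-density upper bound on large scales, Lebesgue differentiation on small scales, continuity in $\rho$ to produce the exit radius) followed by an application of Vitali's covering theorem to the Euclidean balls $B_{\rho_y}(y)$. Your side remarks — that Vitali must be applied to the full balls rather than the truncations, and that the covering is only guaranteed up to a null set since the exit radius exists only for a.e.\ $y\in E(\lambda,s_1 r)$ — are both accurate and do not affect the subsequent integral estimates.
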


Furthermore, we can deduce  from Lemma~\ref{coveringlem}, in particular, \eqref{covering1}, that
\begin{equation}\label{covering3}
\mint_{\Omega_{\rho_i}(y^i)} \left[\Phi(u,V)\right]^p \,dx \geq \left(\frac{\lambda}{2}\right)^p\ \ \ \text{or}\ \ \
 \mint_{\Omega_{\rho_i}(y^i)} |F|^p \,dx \geq \left(\frac{\delta\lambda}{2}\right)^p.
\end{equation}
If the first inequality holds, we have
$$
 \left|\Omega_{\rho_i}(y^i)\right|\leq  \frac{2^p}{\lambda^p}\left( \int_{\Omega_{\rho_i}(y^i)\cap\left\{\Phi(u,V)>\frac{\lambda}{2^{(p+1)/p}} \right\}} \left[\Phi(u,V)\right]^p \,dx+\frac{\left|\Omega_{\rho_i}(y^i)\right|\lambda^p}{2^{p+1}} \right)
$$
and so
$$
 \left|\Omega_{\rho_i}(y^i)\right|\leq  \frac{2^{p+1}}{\lambda^p} \int_{\Omega_{\rho_i}(y^i)\cap\left\{\Phi(u,V)>\frac{\lambda}{2^{(p+1)/p}} \right\}} \left[\Phi(u,V)\right]^p \,dx.
$$
Similarly, if the second inequality in \eqref{covering3} holds, we have
$$
 \left|\Omega_{\rho_i}(y^i)\right|\leq  \frac{2^{p+1}}{(\delta\lambda)^p} \int_{\Omega_{\rho_i}(y^i)\cap\left\{|F|>\frac{\lambda\delta}{2^{(p+1)/p}} \right\}} |F|^p \,dx.
$$
Therefore, in any case, we have
\begin{eqnarray}\label{omegai}
\nonumber \left|\Omega_{\rho_i}(y^i)\right|&\leq & \frac{2^{p+1}}{\lambda^p} \Bigg(\int_{\Omega_{\rho_i}(y^i)\cap\left\{\Phi(u,V)>\frac{\lambda}{2^{(p+1)/p}} \right\}} \left[\Phi(u,V)\right]^p \,dx\\
 &&\qquad \qquad \quad+ \int_{\Omega_{\rho_i}(y^i)\cap\left\{\frac{|F|}{\delta}>\frac{\lambda}{2^{(p+1)/p}} \right\}} \left[\frac{|F|}{\delta}\right]^p \,dx\Bigg).
\end{eqnarray}

\vspace{0.2cm}\textit{Step 2. Comparison estimates.}\ \\
From Lemma~\ref{coveringlem}, in particular, \eqref{covering2}, we note that
$$
\left(\mint_{\Omega_{20\rho_i}(y^i)} \left[|Du|+  V^{\frac1p} |u|\right]^p \,dx\right)^\frac1p <\lambda \ \ \ \text{and}\ \ \
 \left(\mint_{\Omega_{20\rho_i}(y^i)} |F|^p \,dx\right)^\frac1p <\delta\lambda.
$$
Then applying Lemma~\ref{comestlem} and Lemma~\ref{comestlem1}, for any $\epsilon \in (0,1),$  there exists a small $\delta= \delta( \epsilon, n, p, \gamma,  L, \nu)>0$ such that
\begin{equation}\label{tDumDvibdd}
\left( \mint_{\Omega_{20\rho_i}(y^i)} |Du-Dw_i|^{p}  \, dx\right)^{\frac1p} +
\left( \mint_{\Omega_{20\rho_i}(y^i)} \left|V^{\frac1p}u-V^{\frac1p}w_i\right|^{p}  \, dx\right)^{\frac1p} \leq \epsilon \lambda,
\end{equation}
\begin{equation}\label{tDvihibdd}
\left( \mint_{\Omega_{5\rho_i}(y^i)} \left| Dw_i\right|^{\gamma^* (p-1)}  \, dx \right)^{\frac{1}{\gamma^* (p-1)}} \leq c\lambda
\end{equation}
and
\begin{equation}\label{tDvihibdd1}
\left( \mint_{\Omega_{5\rho_i}(y^i)} \left[V^\frac{1}{p}\left| w_i\right|\right]^{p\gamma }  \, dx \right)^{\frac{1}{p\gamma }} \leq c\lambda,
\end{equation}
where $w_i \in W^{1,p}(\Omega_{20\rho_i}(y^i))$ is the unique weak solution to
$$
\left\{\begin{array}{rclcc}
-\mathrm{div}\, \mathbf{a}(x,Dw_i) + V |w_i|^{p-2}w_i&  =  & 0 & \textrm{ in } & \Omega_{20\rho_i}(y^i),  \\
w_i & = & u & \textrm{ on } & \partial \Omega_{20\rho_i}(y^i).
\end{array}\right.
$$
Furthermore, recalling the definition of $\Phi$ in \eqref{Phi} and the fact $\gamma^* (p-1)>p\gamma $, we have from \eqref{tDvihibdd} and  \eqref{tDvihibdd1} that
\begin{equation}\label{Vvibdd}
\mint_{\Omega_{5\rho_i}(y^i)} \left[\Phi(w_i,V)\right]^{p\gamma }  \, dx \leq c\lambda^{p\gamma},
\end{equation}
for some constant  $c=c(n,p, \gamma ,L, \nu,b_\gamma)>0.$

\vspace{0.2cm}\textit{Step 3. Estimates for $\Phi(u,V)$.} \ \\
Let $y \in \Omega_{5\rho_i}(y^i)$ such that $\Phi(u,V)(y)> K\lambda,$ where $K\geq 1$ will be chosen later.
We then note that
$$
\Phi(u,V)(y) \leq \Phi(w_i,V)(y)+|Du(y)-Dw_i(y)|+[V(y)]^{\frac1p}|u(y)-w_i(y)|.
$$
Here, we need to consider the two cases:
\begin{eqnarray*}
\textrm{(i)} && \Phi(w_i,V)(y) \leq|Du(y)-Dw_i(y)|+[V(y)]^{\frac1p}|u(y)-w_i(y)|,\\
\textrm{(ii)} && \Phi(w_i,V)(y)  > |Du(y)-Dw_i(y)|+[V(y)]^{\frac1p}|u(y)-w_i(y)|.
\end{eqnarray*}
For the case (i), it is clear that
$$
\Phi(u,V)(y) \leq 2\left(|Du(y)-Dw_i(y)|+[V(y)]^{\frac1p}|u(y)-w_i(y)|\right).
$$
For the case (ii), we have that
$$ K\lambda  < \Phi(u,V)(y)  \leq 2\Phi(w_i,V)(y),$$
from which, it follows that
$$
\Phi(u,V)(y)\leq 2 \Phi(w_i,V)(y) \left[ \frac{2\Phi(w_i,V)(y)}{K\lambda}   \right]^{\gamma -1}  = \frac{2^\gamma}{(K \lambda)^{\gamma-1}} [\Phi(w_i,V)(y)]^{\gamma} .
$$
 In turn, for the both cases (i) and (ii), we have that
\begin{eqnarray*}
[\Phi(u,V)(y)]^p &\leq&  2^{2p-1} \left( |Du(y)-Dw_i(y)|^p + V(y)|u(y)-w_i(y)|^p \right)\\
 && + \frac{2^{p\gamma}}{(K \lambda)^{p\gamma-p}} [\Phi(w_i,V)(y)]^{p\gamma}
\end{eqnarray*}
for any $y \in \Omega_{5\rho_i}(y^i)$ such that $\Phi(u,V)(y)> K\lambda.$

Then applying \eqref{tDumDvibdd}-\eqref{Vvibdd}, we deduce that
\begin{eqnarray*}
 \int_{\Omega_{5\rho_i}(y^i) \cap E(K\lambda , s_2 r)} [\Phi(u,V)]^p\, dx &\leq &  c \int_{\Omega_{5\rho_i}(y^i)} \left[  |Du - Dw_i|^p +V |u- w_i|^p \right]\, dx\\
&&\quad +  \frac{c}{(K \lambda)^{p\gamma-p}}  \int_{\Omega_{5\rho_i}(y^i)} [\Phi(w_i,V)(y)]^{p\gamma}  \, dx \\
& \leq &  c \left(\epsilon \lambda^p  +  \frac{\lambda^{p\gamma}  }{(K \lambda)^{p\gamma-p}}\right) \left| \Omega_{5\rho_i}(y^i)\right|\\
& \leq &  c\, \lambda^p \left(  \epsilon + \frac{1}{K^{p\gamma-p}}  \right) \left| \Omega_{\rho_i}(y^i)\right|\\
& =  & c\,\tilde\epsilon \lambda^p \left| \Omega_{\rho_i}(y^i)\right|
\end{eqnarray*}
for some constant $c=c(n,p, \gamma, L, \nu, b_\gamma)>0,$ where
\begin{equation}\label{tepsilon}
\tilde \epsilon:=   \epsilon + \frac{1}{K^{p\gamma-p}}.
\end{equation}
Therefore, inserting \eqref{omegai} into the previous estimate, we have that
\begin{eqnarray*}
 \int_{\Omega_{5\rho_i}(y^i) \cap E(K\lambda, s_1 r)} [\Phi(u,V)]^p\, dx & \leq& c\tilde \epsilon  \Bigg(  \int_{\Omega_{\rho_i}(y^i)\cap\left\{\Phi(u,V)>\frac{\lambda}{2^{(p+1)/p}} \right\}} \left[\Phi(u,V)\right]^p \,dx\\
&&\qquad\quad + \int_{\Omega_{\rho_i}(y^i)\cap\left\{\frac{|F|}{\delta}>\frac{\lambda\delta}{2^{(p+1)/p}} \right\}} \left[\frac{|F|}{\delta}\right]^p \,dx\Bigg).
\end{eqnarray*}

According to Lemma~\ref{coveringlem}, we note that $\Omega_{\rho_i}(y^i)$ is mutually disjoint and
$$
E(K\lambda, s_1 r) \subset E(\lambda, s_1 r )\subset \bigcup_{i=1}^{\infty} \Omega_{5\rho_i}(y^i) \subset \Omega_{s_2 r} ,
$$
since $K \geq 1.$ Then  we have that
\begin{eqnarray}\label{EDubddcal}
\nonumber \int_{ E(K\lambda, s_1 r)} [\Phi(u,V)]^p\, dx  &\leq&  \sum_{i=1}^{\infty}  \int_{\Omega_{5\rho_i}(y^i) \cap E(K\lambda, s_1 r)} [\Phi(u,V)]^p\, dx \\
\nonumber & \leq & c\tilde \epsilon  \Bigg(  \int_{\Omega_{s_2r}\cap\left\{\Phi(u,V)>\frac{\lambda}{2^{(p+1)/p}} \right\}} \left[\Phi(u,V)\right]^p \,dx\\
 & &\qquad \qquad + \int_{\Omega_{s_2r}\cap\left\{\frac{|F|}{\delta}>\frac{\lambda}{2^{(p+1)/p}} \right\}} \left[\frac{|F|}{\delta}\right]^p \,dx\Bigg)
\end{eqnarray}
for some constant  $c=c(n,p, \gamma, L, \nu, b_\gamma)>0.$


\vspace{0.2cm}\textit{Step 4. Proof of \eqref{mainlocalest} when $q\in(p,p\gamma)$.} \ \\
We shall use a truncation argument. For $k \geq A\lambda_0,$ let us define
$$
\Phi(u,V)_k := \min\left\{ \Phi(u,V), k \right\},
$$
and denote the upper level sets with respect to $\Phi(u,V)_k $ by
$$ E_k ( \tilde \lambda, \rho):= \left\{ y \in \Omega_{\rho} :  \Phi(u,V)_k> \tilde \lambda\right\}\ \ \text{for }\tilde \lambda,\rho>0. $$
Then since $E_k ( K\lambda, s_1 r) \subset E ( K\lambda, s_1 r)$ and
$$
\left\{ \Phi(u,V)_k >\frac{\lambda}{2^{(p+1)/p}} \right\} = \left\{ \Phi(u,V)>\frac{\lambda}{2^{(p+1)/p}} \right\},
$$
we see from \eqref{EDubddcal} that
\begin{eqnarray*}
 \int_{ E_k(K\lambda, s_1 r)} [\Phi(u,V)]^p\, dx & \leq & c\tilde \epsilon  \Bigg(  \int_{E_k\left(\frac{\lambda}{2^{(p+1)/p}},s_2r\right)} \left[\Phi(u,V)\right]^p \,dx\\
 & &\qquad \quad +\int_{\Omega_{s_2r}\cap\left\{\frac{|F|}{\delta}>\frac{\lambda}{2^{(p+1)/p}} \right\}} \left[\frac{|F|}{\delta}\right]^p \,dx\Bigg).
\end{eqnarray*}
Then by multiplying both sides by $\lambda^{q-p-1}$ and integrating with respect to $\lambda$ over $(A\lambda_0, \infty)$, we have that
\begin{eqnarray}\label{lamDuEk}
\nonumber I_0 &:=&\int_{A\lambda_0}^\infty \lambda^{q-p-1}\int_{ E_k(K\lambda, s_1 r)} [\Phi(u,V)]^p\, dxd\lambda\\
 \nonumber & \leq & c\tilde \epsilon  \Bigg(  \int_{A\lambda_0}^\infty \lambda^{q-p-1} \int_{E_k\left(\frac{\lambda}{2^{(p+1)/p}},s_2r\right)} \left[\Phi(u,V)\right]^p \,dxd\lambda\\
\nonumber & &\qquad \quad  +\int_{A\lambda_0}^\infty \lambda^{q-p-1}  \int_{\Omega_{s_2r}\cap\left\{\frac{|F|}{\delta}>\frac{\lambda}{2^{(p+1)/p}} \right\}} \left[\frac{|F|}{\delta}\right]^p \,dxd\lambda\Bigg)\\
&=:& c\tilde\epsilon (I_1+I_2).
\end{eqnarray}
Here, Fubini's theorem allows us to deduce that
\begin{eqnarray*}
I_0 &=& \int_{E_k(KA\lambda_0, s_1 r)} [\Phi(u,V)]^p  \left( \int_{A\lambda_0}^{\Phi(u,V)_k(x)/K}  \lambda^{q-p-1} \, d\lambda \right) dx\\
&=& \frac{1}{q-p}\, \Bigg\{  \int_{E_k(KA\lambda_0, s_1 r)} [\Phi(u,V)]^p  \left[\frac{\Phi(u,V)_k}{K}\right]^{q-p}  \,  dx \\
&&\qquad\qquad\qquad\qquad  -(A\lambda_0)^{q-p}\int_{E_k(KA\lambda_0, s_1 r)} [\Phi(u,V)]^p \,dx \Bigg\}.
\end{eqnarray*}
We also employ Fubini's theorem to discover
\begin{eqnarray*}
I_1&=&     \int_{E_k\left(\frac{A\lambda_0}{2^{(p+1)/p}},s_2r\right)} [\Phi(u,V)]^p \left(\int_{A\lambda_0}^{ 2^{(p+1)/p} \Phi(u,V)_k(x)}  \lambda^{q-p-1} \,d\lambda\right) dx \\
&\leq&  \frac{1}{q-p}  \int_{E_k\left(\frac{A\lambda_0}{2^{(p+1)/p}},s_2r\right)} [\Phi(u,V)]^p  \left[2^{(p+1)/p} \Phi(u,V)_k\right]^{q-p}\, dx \\
&\leq &  \frac{2^{(p+1)(q-p)/p}}{q-p}   \int_{\Omega_{s_2 r}}  [\Phi(u,V)]^p  \left[\Phi(u,V)_k\right]^{q-p} \, dx.
\end{eqnarray*}
Similarly, we obtain that
$$
I_2\leq  \frac{2^{(p+1)(q-p)/p}}{q-p}   \int_{\Omega_{s_2 r}}  \left[\frac{|F|}{\delta}\right]^q  \, dx.
$$
Therefore, inserting the previous estimates for $I_0$, $I_1$, $I_2$ into \eqref{lamDuEk}, we derive
\begin{eqnarray*}
&&\int_{E_k(KA\lambda_0, s_1 r)} [\Phi(u,V)]^p  [\Phi(u,V)_k]^{q-p}  \,  dx \\
  && \quad \leq  (KA\lambda_0)^{q-p}\int_{\Omega_{s_1 r}} [\Phi(u,V)]^p \,dx\\
&&\quad \quad +c\tilde \epsilon K^{q-p} \left( \int_{\Omega_{s_2 r}}  [\Phi(u,V)]^p [\Phi(u,V)_k]^{q-p} \, dx + \int_{\Omega_{s_2 r}}   \left[\frac{|F|}{\delta} \right]^{q} \, dx \right).
\end{eqnarray*}
We also notice that
$$
\int_{\Omega_{s_1 r} \setminus E_k(KA\lambda_0, s_1 r)} [\Phi(u,V)]^p[\Phi_k(u,V)_k]^{q-p}\,  dx  \leq  (KA\lambda_0)^{q-p} \int_{\Omega_{s_1 r}} [\Phi(u,V)]^p \, dx.
$$
Finally, from the last two estimates we have that
\begin{eqnarray*}
\int_{\Omega_{s_1 r}}[\Phi(u,V)]^p [\Phi(u,V)_k]^{q-p}  \,  dx & \leq & (KA\lambda_0)^{q-p}\int_{\Omega_{s_1 r}}[\Phi(u,V)]^p\,dx\\
&&\hspace{-4.5cm}+c_2\tilde \epsilon K^{q-p} \left( \int_{\Omega_{s_2 r}}  [\Phi(u,V)]^p [\Phi(u,V)_k]^{q-p} \, dx + \int_{\Omega_{s_2 r}}   \left[\frac{|F|}{\delta} \right]^{q} \, dx \right)
\end{eqnarray*}
for some $c_2=c_2(n,p,\gamma,L,\nu,b_\gamma,q)>0$. At this stage, we recall the definition of $\tilde \epsilon$ in \eqref{tepsilon}, and then take large $K>1$ and small $\epsilon\in(0,1)$ depending on  $n,p,\gamma,L,\nu,b_\gamma,q$ such that
$$
K\geq (4c_2)^{\frac{1}{p\gamma-q}} \ \ \ \text{and}\ \ \ \epsilon\leq\frac{1}{4c_2K^{q-p}},
$$
hence $\delta=\delta(n,p,\gamma,L,\nu,b_\gamma,q)\in(0,1)$ is finally determined.  Consequently, recalling the definition of $A$ in \eqref{lambdarg} we  have
\begin{eqnarray*}
\int_{\Omega_{s_1 r}}[\Phi(u,V)]^p [\Phi(u,V)_k]^{q-p}  \,  dx & \leq & \frac12 \int_{\Omega_{s_2 r}}  [\Phi(u,V)]^p [\Phi(u,V)_k]^{q-p} \, dx \\
&& \hspace{-2cm}+\frac{c\lambda_0^{q-p}}{(s_2-s_1)^{\frac np}}\int_{\Omega_{2r}}[\Phi(u,V)]^p\,dx+ c \int_{\Omega_{2 r}}   |F|^q \, dx .
\end{eqnarray*}
Then applying Lemma~\ref{teclem}, we derive that
$$
\int_{\Omega_{r}}[\Phi(u,V)]^p [\Phi(u,V)_k]^{q-p}  \,  dx  \leq c\lambda_0^{q-p}\int_{\Omega_{2r}}[\Phi(u,V)]^p\,dx+ c \int_{\Omega_{2 r}}   |F|^q \, dx
$$
for any $k>A\lambda_0$. Finally, by Lebesgue's monotone convergence theorem, the definition of $\lambda_0$ in  \eqref{lambda0}, H\"older's inequality and Young's inequality, we obtain that
\begin{eqnarray*}
\mint_{\Omega_{r}}[\Phi(u,V)]^q\,dx &= &\lim_{k\to\infty} \mint_{\Omega_{r}}[\Phi(u,V)]^p [\Phi(u,V)_k]^{q-p}  \,  dx\\
& \leq& c\lambda_0^{q-p}\mint_{\Omega_{2r}}[\Phi(u,V)]^p\,dx+ c \mint_{\Omega_{2 r}}   |F|^q \, dx\\
& \leq& c\left(\mint_{\Omega_{2r}}[\Phi(u,V)]^p\,dx\right)^{\frac{q}{p}}+ c \mint_{\Omega_{2 r}}   |F|^q \, dx,
\end{eqnarray*}
and so, recalling the definition of $\Phi(u,V)$ in \eqref{Phi},
\begin{eqnarray}\label{mainlocalest1}
\nonumber\left( \mint_{\Omega_{r}} |Du|^q +\left[V^{\frac1p}|u|\right]^{q}\,  dx\right)^{\frac1q} &\leq& c \left( \mint_{\Omega_{2r}} |Du|^p + \left[ V^{\frac1p} |u|\right]^p \,dx \right)^{\frac{1}{p}}\\
&&+ c  \left(\mint_{\Omega_{2r}}   \left|F \right|^{q} \, dx\right)^{\frac1q},
\end{eqnarray}
which derives the estimate \eqref{mainlocalest} for $q\in(p,p\gamma)$.

\vspace{0.2cm}\textit{Step 5. Proof of \eqref{mainlocalest} when $q\in[p\gamma,\gamma^*(p-1))$.}  \ \\
Finally, we prove the estimate \eqref{mainlocalest} for the remaining   range of $q$.  Note that we only consider the gradient of $u$ since $\chi_{\{q<p\gamma\}}=0$.

We first suppose that $q\in[p\gamma,\gamma^*(p-1))$ satisfies
\begin{equation}\label{case1}
\max\left\{p,\frac{n(p-1)}{n-1}\right\}<q.
\end{equation}
Note that if $p\leq n$ we have $\max\{p,\frac{n(p-1)}{n-1}\}=p$ and so the previous inequality is trivial. Then let us set $\tilde q\in (1,\gamma)$  such that
\begin{equation}\label{tq}
q=(p-1)\tilde q^*.
\end{equation}
Then we see from H\"older's inequality that
 \begin{eqnarray*}
 \left( \mint_{\Omega_{2r}} \left[ r V |u|^{p-1}\right]^{\tilde{q}}\, dx \right)^{\frac{1}{\tilde{q}}} & = &\left( \mint_{\Omega_{2r}} \left[ r V^{\frac1p} \right]^{\tilde{q}} \left[ V^{\frac1p} |u| \right]^{(p-1)\tilde{q}}\, dx \right)^{\frac{1}{\tilde q}} \\
 &\leq&  \left( \mint_{\Omega_{2r}} [r^pV]^{\tilde{q}}\, dx\right)^{\frac{1}{p\tilde{q}}} \left( \mint_{\Omega_{2r}} \left[V^{\frac{1}{p}}|u|\right]^{p\tilde{q}} \, dx  \right)^{\frac{p-1}{p\tilde{q}}} \\
& \leq  & c \left( r^{p\gamma-n} \int_{\Omega_{2r}} V^{\gamma}\, dx\right)^{\frac{1}{p\gamma}} \left( \mint_{\Omega_{2r}} \left[V^{\frac{1}{p}}|u|\right]^{p\tilde{q}} \, dx  \right)^{\frac{p-1}{p\tilde{q}}}\\
& \leq  & c \left( \mint_{\Omega_{2r}} \left[V^{\frac{1}{p}}|u| \right]^{p\tilde{q}} \, dx  \right)^{\frac{p-1}{p\tilde{q}}}.
\end{eqnarray*}
Here we have used the facts that $\tilde{q} \in (1,\gamma)$ and $(2r)^{p - \frac{n}{\gamma}} \Vert V \Vert_{L^{\gamma}(\Omega_{2r})}\leq 1.$ Therefore, applying the estimate \eqref{mainlocalest1} with $q$ and $r$ replaced by $p\tilde q$  and $2r$, respectively, we have that  $V |u|^{p-2}u\in L^{\tilde q}(\Omega_{2r})$ with the estimate
\begin{eqnarray}\label{mainpf1}
\nonumber \left( \mint_{\Omega_{2r}} \left[ r V |u|^{p-1}\right]^{\tilde{q}}\, dx \right)^{\frac{1}{\tilde{q}}} &\leq& c  \left( \mint_{\Omega_{4r}}\left[ |Du|  + V^\frac{1}{p}|u|\right]^p \,dx \right)^{\frac{p-1}{p}}\\
&&+c  \left(\mint_{\Omega_{4 r}}   |F|^{p\tilde{q}} \, dx\right)^{\frac{p-1}{p\tilde{q}}}.
\end{eqnarray}
Finally, by Theorem~\ref{thmDwbdd} with $f=V |u|^{p-2}u$, the previous estimate \eqref{mainpf1} and H\"older's inequality, we have that
\begin{eqnarray}\label{mainpf2}
\nonumber \left(\mint_{\Omega_r} |Du|^{q} \, dx \right)^{\frac{1}{q}} &\leq& c \left( \mint_{\Omega_{2r}} |Du|^{p}\, dx\right)^{\frac{1}{p}}   + c \left( \mint_{\Omega_{2r}} \left[ r V |u|^{p-1}\right]^{\tilde{q}} \, dx\right)^{\frac{1}{\tilde{q}(p-1)}} \\
\nonumber &&  + c \left( \mint_{\Omega_{2r}} |F|^{q} \, dx\right)^{\frac{1}{q}}\\
\nonumber &\leq& c \, \left( \mint_{\Omega_{4r}} \left[ |Du|+ V^{\frac1p} |u|\right]^p \,dx \right)^{\frac{1}{p}} + c  \left(\mint_{\Omega_{4 r}}   \left|F \right|^{p \tilde{q}} \, dx\right)^{\frac{1}{p \tilde{q}}} \\
\nonumber && + c  \left(\mint_{\Omega_{4 r}}   \left|F \right|^{q} \, dx\right)^{\frac{1}{q}}\\
&\leq& c \, \left( \mint_{\Omega_{4r}} \left[ |Du|+ V^{\frac1p} |u|\right]^p \,dx \right)^{\frac{1}{p}} + c  \left(\mint_{\Omega_{4 r}}   \left|F \right|^{q} \, dx\right)^{\frac{1}{q}},
\end{eqnarray}
which proves the estimate \eqref{mainlocalest}.

We next assume that $q\in[p\gamma,\gamma^*(p-1))$ does not satisfies \eqref{case1}, that is,
$$
p \gamma \leq q\leq \max\left\{p,\frac{n(p-1)}{n-1}\right\}.
$$
Note that this happens only for the case that $p>n$ and $1<\gamma\leq \frac{n(p-1)}{p(n-1)}$, and that, in this case, we cannot find $\tilde q\in (1,\gamma)$ satisfying \eqref{tq}. Instead, let us set
$$
\tilde q:= \frac{1+\gamma}{2} \in (1,\gamma).
$$
Then, in the same argument above, we have the estimate \eqref{mainpf1}. Using this, Theorem~\ref{thmDwbdd1} (instead of Theorem \ref{thmDwbdd}) and H\"older's inequality, we obtain the estimate \eqref{mainpf2}. Hence, \eqref{mainlocalest} holds for the remaining range for $q$. This completes the proof.

\subsection{Proof of Corollary \ref{maincor}}\ \\
We take the test function $\varphi=u$ in the weak formulation \eqref{weakform}, and then use Young's inequality to arrive at
\begin{eqnarray*}
&&c(p,\nu)\int_{\Omega} |Du|^{p}\,dx + \int_{\Omega} V|u|^p \, dx \\
&&\quad \leq  \int_{\Omega} \mathbf{a}(x, Du) \cdot Du \, dx  + \int_{\Omega} V |u|^{p-2}u \cdot u  \, dx = \int_{\Omega} |F|^{p-2} F \cdot D u\, dx \\
&&\quad \leq   \int_{\Omega} |F|^{p-1} |Du| \,dx \leq c(\tau) \int_{\Omega} |F|^{p} \, dx + \tau \int_{\Omega} |Du|^{p}  \,dx
\end{eqnarray*}
for any small $\tau>0.$  Here we have used the inequality that $\mathbf{a}(x, \xi)\cdot \xi \geq c(p,\nu)  |\xi|^p,$ which can be easily obtained from \eqref{aas2}. We choose $\tau>0$ so small that
\begin{equation}\label{energyest}
\int_{\Omega} |Du|^{p}\,dx + \int_{\Omega} V|u|^p \, dx \leq  c \int_{\Omega} |F|^{p} \, dx. \end{equation}

On the other hand,  from the resulting estimates \eqref{mainlocalest} with $r = \frac{\tilde{R}}{4}$ where $\tilde{R} := \min\Big\{ R, \Vert V \Vert_{L^{\gamma}(\Omega)}^{-\frac{1}{p-\frac{n}{\gamma}}}\Big\}$  and $x_0 \in \overline{\Omega},$  we get that \begin{eqnarray}\label{Duestcov}
\nonumber&&\int_{\Omega_{\tilde{R}/4}(x_0)} |Du|^{q} + \chi_{\{q<p\gamma\}} \left[ V^{\frac1p} |u| \right]^q  \, dx  \\
&&\qquad \leq  \frac{c}{\tilde{R}^{n\left(\frac{q}{p}-1\right)}}\left( \int_{\Omega_{\tilde{R}}(x_0)} |Du|^p + \left[ V^{\frac1p} |u| \right]^p \,dx \right)^{\frac{q}{p}} + c  \int_{\Omega_{\tilde{R}}(x_0)}   \left|F \right|^{q} \, dx.
\end{eqnarray}
Since $\overline{\Omega}$ is compact, by Vitali's covering lemma, there exist finitely many points $x_0^1, \cdots, x_0^N$ in $\overline{\Omega}$ such that $B_{\tilde{R}/20}(x_0^k)$, $k=1,2,\dots,N$  are mutually disjoint and $\Omega \subseteq \bigcup_{k=1}^{N} B_{\tilde{R}/4}(x_0^k).$ Here we note that $\sum_{k=1}^N\chi_{B_{\tilde{R}}(x_0^k)}\leq c(n)$. Therefore from \eqref{Duestcov}, we deduce that
\begin{eqnarray}\label{glDuestcov}
\nonumber&&\int_{\Omega} |Du|^{q} + \chi_{\{q<p\gamma\}} \left[ V^{\frac1p} |u| \right]^q  \, dx \\
\nonumber&&\quad  \leq  \sum_{k=1}^{N} \int_{\Omega_{\tilde{R}/4}(x_0^k)} |Du|^{q} + \chi_{\{q<p\gamma\}} \left[ V^{\frac1p} |u| \right]^q  \, dx    \\
\nonumber&&\quad\leq c\,\sum_{k=1}^{N} \left\{\frac{1}{\tilde{R}^{n\left(\frac{q}{p}-1\right)}}\left( \int_{\Omega_{\tilde{R}}(x_0^k)} |Du|^p + \left[ V^{\frac1p} |u| \right]^p \,dx \right)^{\frac{q}{p}} +   \int_{\Omega_{\tilde{R}}(x_0^k)}   \left|F \right|^{q} \, dx\right\}\\
&&\quad \leq \frac{c}{\tilde{R}^{n\left(\frac{q}{p}-1\right)}}  \left( \int_{\Omega} |Du|^p + \left[ V^{\frac1p} |u| \right]^p \,dx \right)^{\frac{q}{p}} + c  \int_{\Omega}   \left|F \right|^{q} \, dx.
\end{eqnarray}
In turn, inserting \eqref{energyest} into \eqref{glDuestcov} and using H\"older's inequality  together with the fact that $\mathrm{diam}(\Omega)> \tilde{R}$, we obtain
\begin{eqnarray}
\nonumber&&\int_{\Omega} |Du|^{q} + \chi_{\{q<p\gamma\}} \left[ V^{\frac1p} |u| \right]^q  \, dx \leq  c\left(\frac{\mathrm{diam}(\Omega)}{\tilde{R}}\right)^{\frac{n(q-p)}{p}}\int_{\Omega}   \left|F \right|^{q} \, dx,
\end{eqnarray}
which implies the desired estimates \eqref{maingloest}.


\bibliographystyle{amsplain}

\end{document}